\tikzstyle{v} = [circle, draw, inner sep=2pt, minimum size=3pt, fill=black]
\tikzstyle{e} = [fill, opacity=.2, 
    \pgfmathsetlengthmacro\lw{.3pt+.5\pgflinewidth}
      \pgfmathtruncatemacro\dashnum{%
        round((\pgfdecoratedinputsegmentlength-3pt)/6pt)
      }
      \pgfmathsetmacro\dashscale{%
        \pgfdecoratedinputsegmentlength/(\dashnum*6pt + 3pt)
      }
      \pgfmathsetlengthmacro\dashunit{3pt*\dashscale}
\theoremstyle{plain}
\newtheorem{theorem}{Theorem}[section]
\newtheorem{lemma}[theorem]{Lemma}
\newtheorem{proposition}[theorem]{Proposition}
\theoremstyle{definition}
\newtheorem{definition}[theorem]{Definition}
\newtheorem{remark}[theorem]{Remark}
\newtheorem{case}{Case}
\DeclareMathOperator{\Der}{Der}
\DeclareMathOperator{\rank}{rank}
\DeclareMathOperator{\Ker}{Ker}
\title {Freeness of Hyperplane Arrangements between Boolean Arrangements and Weyl Arrangements of Type $ B_{\ell} $
}
\author{
Michele Torielli\thanks{Department of Mathematics, GI-CoRE GSB, Hokkaido University, Sapporo, Hokkaido 060-0810, Japan.
email:torielli@math.sci.hokudai.ac.jp},
Shuhei Tsujie\thanks{Department of Information Design, Hiroshima Kokusai Gakuin University, Hiroshima 739-0321, Japan.
email:tsujie@edu.hkg.ac.jp}
}
\date{}
\begin{document}
\maketitle
	
\begin{abstract}
Every subarrangement of Weyl arrangements of type $ B_{\ell} $ is represented by a signed graph. 
Edelman and Reiner characterized freeness of subarrangements between type $ A_{\ell-1} $ and type $ B_{\ell} $ in terms of graphs. 
Recently, Suyama and the authors characterized freeness for subarrangements containing Boolean arrangements satisfying a certain condition. 
This article is a sequel to the previous work. 
Namely, we give a complete characterization for freeness of arrangements between Boolean arrangements and Weyl arrangements of type $ B_{\ell} $ in terms of graphs. 
\end{abstract}

{\footnotesize \textit{Keywords}: 
Hyperplane arrangement, 
Graphic arrangement, 
Weyl arrangement, 
Free arrangement,
Supersolvable arrangement,
Chordal graph, 
Signed graph
}

{\footnotesize \textit{2010 MSC}: 
52C35, 
32S22,  
05C22, 
20F55,  
13N15 
}

\section{Introduction}\label{sec:introduction}
A (central) hyperplane arrangement in a vector space is a finite collection of vector subspaces of codimension one. 
In this article, we are mainly interested in the study of \textbf{freeness} of arrangements (see Section \ref{sec:preliminaries} for definitions and basic properties). 
An arrangement consisting of the reflecting hyperplanes of a Weyl group is called a \textbf{Weyl arrangement}. 
Saito \cite{saito1977uniformization-rk,saito1980theory-jotfostuotsam} proved that every Weyl arrangement is free. 
However, no complete characterizations of freeness for Weyl subarrangements are known except for type $ A $. 
Weyl subarrangements of type $ A $ are represented by simple graphs and their freeness is characterized in terms of simple graphs (see Section \ref{sec:preliminaries}). 
In a similar way, Weyl subarrangements of type $ B $ are represented by signed graphs. 

In this article, a \textbf{signed graph} is a pair $ G=(G^{+},G^{-}) $ in which $ G^{+} = (V_{G}, E_{G}^{+}) $ and $ G^{-}=(V_{G},E_{G}^{-}) $ are simple graphs on a common set of vertices $ V_{G} $. 
Notice that we do not consider graphs with loops, half edges, and so on. 
See \cite{zaslavsky1982signed-dam} for a general treatment of signed graphs. 
Elements in the set $ E_{G}^{+} $ (respectively $ E_{G}^{-} $) are called \textbf{positive} (respectively \textbf{negative}) \textbf{edges}. Edges are sometimes called \textbf{links}. 

Let $ \mathbb{K} $ be a field of characteristic zero. 
For each signed graph $ G $ on the vertex set $ \{1, \dots, \ell\} $, we define the \textbf{signed-graphic arrangement} $ \mathcal{A}(G) $ in the $ \ell $-dimensional vector space $ \mathbb{K}^{\ell} $ by 
\begin{multline*}
\mathcal{A}(G) 
\coloneqq  \Set{\{x_{i}=0\} | 1 \leq i \leq \ell} \\ \cup \Set{\{x_{i}-x_{j}=0\} | \{i,j\} \in E_{G}^{+}} 
\cup \Set{\{x_{i}+x_{j}=0\} | \{i,j\} \in E_{G}^{-}},
\end{multline*}
where $ (x_{1}, \dots, x_{\ell}) $ denotes a basis of the dual space $ (\mathbb{K}^{\ell})^{\ast} $ and, for each linear form $ \alpha \in (\mathbb{K}^{\ell})^{\ast} $, $ \{\alpha = 0 \} $ denotes the hyperplane $ \Ker (\alpha) $ in $ \mathbb{K}^{\ell} $. 
Note that, in \cite{bailey????inductively-p, edelman1994free-mz,  suyama2019signed-dm}, the authors considered signed graphs with loops and associated to each loop the corresponding coordinate hyperplane. 
In this article, however, we will always assume that every signed-graphic arrangements contain all the coordinate hyperplanes. 

Edelman and Reiner \cite[Theorem 4.6]{edelman1994free-mz} characterized the freeness of Weyl subarrangements between type $ A_{\ell-1} $ and $ B_{\ell} $ in terms of graphs. 
Bailey \cite{bailey????inductively-p} characterized the freeness of signed-graphic arrangements for some cases. 
Suyama and the authors characterized the freeness of signed-graphic arrangements corresponding to graphs in the case $ G^{+} \supseteq G^{-} $ as follows. 
\begin{theorem}[{\cite[Theorem 1.4]{suyama2019signed-dm}}]\label{previous theorem}
Let $ G $ be a signed graph with $ G^{+} \supseteq G^{-} $. 
Then the following conditions are equivalent: 
\begin{enumerate}[(1)]
\item $ G $ is balanced chordal. 
\item\label{previous theorem 2} $ \mathcal{A}(G) $ is divisionally free. 
\item $ \mathcal{A}(G) $ is free. 
\end{enumerate}
\end{theorem}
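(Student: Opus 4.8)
I would prove the cycle $(1)\Rightarrow(2)\Rightarrow(3)\Rightarrow(1)$. The step $(2)\Rightarrow(3)$ needs no argument: divisional freeness implies freeness (this is Abe's division theorem, and is essentially built into the definition of divisional freeness). Thus the real content lies in $(1)\Rightarrow(2)$ and in the converse $(3)\Rightarrow(1)$, and I expect the latter to be the main obstacle.

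For $(1)\Rightarrow(2)$ I would induct on $\ell=\lvert V_{G}\rvert$, using the division theorem as the inductive engine. The starting observation is that, for any vertex $v$, restricting $\mathcal{A}(G)$ to the coordinate hyperplane $H=\{x_{v}=0\}$ (which always belongs to $\mathcal{A}(G)$) collapses each hyperplane $\{x_{i}\pm x_{v}=0\}$ onto the already present $\{x_{i}=0\}$, so that $\mathcal{A}(G)^{H}=\mathcal{A}(G\setminus v)$, the arrangement of the induced signed subgraph on $V_{G}\setminus\{v\}$. Since balanced chordality is hereditary under taking induced subgraphs, $G\setminus v$ is again balanced chordal, hence $\mathcal{A}(G\setminus v)$ is divisionally free by the inductive hypothesis. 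It then suffices to choose $v$ so that $\chi(\mathcal{A}(G\setminus v),t)$ divides $\chi(\mathcal{A}(G),t)$: for this I would take $v$ to be a \emph{signed-simplicial} vertex supplied by the balanced chordal structure --- one whose incident edges, together with the edges among its neighbours, form a balanced clique --- and verify the divisibility via Zaslavsky's interpretation of $\chi(\mathcal{A}(G),t)$ as a balanced chromatic polynomial of $G$, for which the signed-simplicial condition forces a factorization $\chi(\mathcal{A}(G),t)=(t-d)\,\chi(\mathcal{A}(G\setminus v),t)$ with $d$ the number of hyperplanes of $\mathcal{A}(G)$ through $v$ not already accounted for on $V_{G}\setminus\{v\}$ (a deletion--contraction count of the admissible colourings at $v$). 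Abe's division theorem then upgrades divisional freeness of $\mathcal{A}(G\setminus v)$ to that of $\mathcal{A}(G)$. (When the elimination ordering additionally yields a modular chain, this shortcuts to supersolvability, which is cleaner but may not always be available.)

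For $(3)\Rightarrow(1)$ I would argue by contraposition: assume $G$ is not balanced chordal and produce a non-free localization of $\mathcal{A}(G)$. The first task is a \emph{forbidden induced subgraph} description of non-balanced-chordality in the regime $G^{+}\supseteq G^{-}$: besides the classical obstruction that $\lvert G\rvert$ carries a chordless cycle of length $\ge 4$ (which already defeats freeness, via the characterization of free graphic arrangements applied to the all-positive localization on those vertices), there are genuinely signed obstructions exploiting the digons forced by $E_{G}^{+}\cap E_{G}^{-}\neq\varnothing$ --- for instance two disjoint digons joined by two positive edges, or sitting inside a larger chordal positive part --- whose arrangements turn out to be non-free even though $\lvert G\rvert$ is chordal. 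For a minimal forbidden induced subgraph $G_{0}$ on a vertex set $S$, localizing $\mathcal{A}(G)$ at the flat $\bigcap_{i\in S}\{x_{i}=0\}$ yields $\mathcal{A}(G_{0})$ up to a trivial product factor; since localizations of free arrangements are free, it remains to check that each minimal $\mathcal{A}(G_{0})$ is non-free, which I would do with Terao's factorization theorem: compute $\chi(\mathcal{A}(G_{0}),t)$ by the finite-field method (counting points of $\mathbb{F}_{q}^{\,\lvert S\rvert}$ off the hyperplanes, i.e.\ signed-proper colourings of $G_{0}$) and exhibit a non-real root. For instance the chordless $4$-cycle with all coordinate hyperplanes gives $\chi=(t-1)(t-2)(t^{2}-5t+7)$, and two digons inside $K_{4}$ joined by its remaining positive edges gives $\chi=(t-1)(t-3)\bigl((t-4)^{2}+1\bigr)$; in each case the non-real roots forbid freeness.

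The hard part is twofold. First, for $(3)\Rightarrow(1)$ one must pin down the \emph{exact} finite list of minimal non-balanced-chordal signed graphs with $G^{+}\supseteq G^{-}$ and verify non-freeness for every one of them --- in particular ruling out ``false positives'' such as a lone digon, a digon attached to a common neighbour, or two digons joined by a single positive edge, all of which remain free (the last via a short division-theorem argument); this is precisely where the hypothesis $G^{+}\supseteq G^{-}$ is indispensable, as it both forces the relevant digons and keeps the forbidden list finite. Second, for $(1)\Rightarrow(2)$ one must establish the characteristic-polynomial divisibility cleanly enough for the induction to close, which reduces to a combinatorial identity for Zaslavsky's balanced chromatic polynomial at a signed-simplicial vertex. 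Everything else --- the reduction to localizations, the hereditariness of balanced chordality, and the invocations of the division and factorization theorems --- is structural.
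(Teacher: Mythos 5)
This theorem is not proved in the present paper (it is imported from \cite[Theorem~1.4]{suyama2019signed-dm}), but the key ingredients of that proof are quoted here as Proposition \ref{STT linksimp-gluing} and Proposition \ref{modular join}, and your proposal diverges from them at the decisive point. Your induction for $(1)\Rightarrow(2)$ rests entirely on the claim that a balanced chordal signed graph with $G^{+}\supseteq G^{-}$ has a signed-simplicial (link-simplicial) vertex, to which you then apply the restriction $\mathcal{A}(G)^{\{x_{v}=0\}}=\mathcal{A}(G\setminus v)$ and the division theorem. That existence claim is the crux, and it is not available: balanced chordality only guarantees the dichotomy of Proposition \ref{STT linksimp-gluing} --- \emph{either} a link-simplicial vertex exists, \emph{or} $G$ decomposes as $G_{1}\cup G_{2}$ with $G_{1}\cap G_{2}=K^{\pm}_{n}$ a complete signed graph. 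If a link-simplicial vertex always existed (and the property is inherited by induced subgraphs), one would get a link elimination ordering and hence supersolvability via Theorem \ref{zaslavsky ss}; the theorem deliberately does not assert supersolvability, and the whole modular-join apparatus (the factorization $\chi(\mathcal{A}(G),t)=\chi(\mathcal{A}(G_{1}),t)\,\chi(\mathcal{A}(G_{2}),t)/\chi(\mathcal{A}(K^{\pm}_{n}),t)$ and Proposition \ref{modular join}) exists precisely to handle the second branch. Your parenthetical remark that the supersolvable shortcut ``may not always be available'' identifies the problem but does not resolve it: when no link-simplicial vertex exists your induction has no step to take. The fix is exactly the paper's second case: feed the clique-sum decomposition into the division theorem via the quotient formula for the characteristic polynomial. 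The rest of your $(1)\Rightarrow(2)$ argument (the identification of the restriction with $\mathcal{A}(G\setminus v)$ and the factor $(t-1-d)$ at a link-simplicial vertex, via the finite-field count) is sound for the first branch.

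Two smaller points on $(3)\Rightarrow(1)$. Your overall strategy --- localize at a minimal obstruction and apply Terao's factorization theorem --- is the right one and matches how the paper treats its own obstructions (Propositions \ref{free => II} and \ref{free => III}). However, the set of minimal non-balanced-chordal induced subgraphs is not finite: there is at least one obstruction for every cycle length (a balanced cycle of length $k\geq 4$ with no balanced chord), so one cannot literally ``check a finite list''; the correct mechanism, as in Proposition \ref{free => II}, is an induction on cycle length using deletion--restriction (delete an edge to get a supersolvable arrangement, restrict, and contradict the inductive non-freeness of the contraction), with Terao's theorem handling only the base case. Also note that the localization at $\bigcap_{i\in S}\{x_{i}=0\}$ is the zero subspace, not a proper flat; the relevant operation is simply passing to the induced subgraph on $S$ (Proposition \ref{free => locally free}), which is a localization at the flat spanned by the corresponding hyperplanes.
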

Note that condition (\ref{previous theorem 2}) in Theorem \ref{previous theorem} is omitted in \cite[Theorem 1.4]{suyama2019signed-dm}. 
However, the verification of freeness was obtained via Abe's division theorem, and therefore condition (\ref{previous theorem 2}) is also equivalent to the other two conditions. 

The main result of this article is a generalization of Theorem \ref{previous theorem} as follows. 
Note that a signed-graphic arrangement $ \mathcal{A}(G) $ is a Weyl subarrangement of type $ B $ containing a Boolean arrangement, and vice versa, 
where a Boolean arrangement is an arrangement consisting of the coordinate hyperplanes $ \{x_{i}=0\} $. 

\makeatletter
\renewcommand{\p@enumii}{}
\makeatother
\begin{theorem}\label{main theorem}
Let $ G $ be a signed graph. 
Then the following conditions are equivalent: 
\begin{enumerate}[(1)]
\item\label{main theorem 1} $ G $ satisfies the following three conditions: 
\begin{enumerate}[(I)]
\item \label{I} $ G $ is balanced chordal. 
\item \label{II} $ G $ has no induced subgraphs isomorphic to unbalanced cycles of length three or more. 
\item \label{III} $ G $ has no induced subgraphs which are switching equivalent to the graph in Figure \ref{Fig:obstruction}. 
\begin{figure}[t]
\centering
\begin{tikzpicture}
\draw (0,1) node[v](1){}; 
\draw (0,0) node[v](2){}; 
\draw (1,0) node[v](3){};
\draw (1,1) node[v](4){};  
\draw[decoration={dashsoliddouble}, decorate] (2)--(1);
\draw[decoration={dashsoliddouble}, decorate] (4)--(3);
\draw (1)--(4)--(2)--(3);
\draw[dashed] (1)--(3);
\end{tikzpicture}
\caption{An obstruction to freeness (Dashed line segments denote negative edges)}\label{Fig:obstruction}
\end{figure}
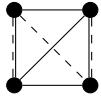
\end{enumerate}
\item\label{main theorem 2} $ \mathcal{A}(G) $ is divisionally free. 
\item\label{main theorem 3} $ \mathcal{A}(G) $ is free. 
\end{enumerate}
\end{theorem}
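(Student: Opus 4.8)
The implication $(\ref{main theorem 2})\Rightarrow(\ref{main theorem 3})$ is immediate, as every divisionally free arrangement is free. The content of the theorem lies in the necessity $(\ref{main theorem 3})\Rightarrow(\ref{main theorem 1})$ and the sufficiency for divisional freeness $(\ref{main theorem 1})\Rightarrow(\ref{main theorem 2})$; I would prove these separately and expect the second to be the hard one.

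For $(\ref{main theorem 3})\Rightarrow(\ref{main theorem 1})$ I argue contrapositively, using two stability facts: switching $G$ at a vertex is realised by the linear automorphism $x_{v}\mapsto -x_{v}$, so $\mathcal{A}(G)\cong\mathcal{A}(G')$ whenever $G,G'$ are switching equivalent; and for $S\subseteq V_{G}$ the arrangement $\mathcal{A}(G[S])$ is, up to a harmless product with a coordinate subspace, the localization of $\mathcal{A}(G)$ at $\bigcap_{i\notin S}\{x_{i}=0\}$, so freeness of $\mathcal{A}(G)$ passes to $\mathcal{A}(G[S])$. Since $(\ref{I})$, $(\ref{II})$, $(\ref{III})$ are invariant under switching and hereditary under induced subgraphs, it suffices to treat the three minimal obstructions. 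If $(\ref{I})$ fails, a minimal witness gives, after a suitable global switching, an induced subgraph $G'$ with ${G'}^{+}\supseteq{G'}^{-}$ that is still not balanced chordal; by Theorem~\ref{previous theorem} its arrangement---a localization of $\mathcal{A}(G)$---is not free. If $(\ref{II})$ or $(\ref{III})$ fails, the obstruction (an unbalanced cycle of length $n\geq 3$, or the graph of Figure~\ref{Fig:obstruction} up to switching) no longer satisfies $G^{+}\supseteq G^{-}$, so I instead compute the characteristic polynomial by the finite field method: I get $\chi(t)=(t-2)^{n}+(-1)^{n-1}$ in the first case and $\chi(t)=(t-1)(t-3)(t^{2}-8t+17)$ in the second, each having a quadratic factor irreducible over $\mathbb{Q}$, so Terao's factorization theorem forbids freeness.

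For $(\ref{main theorem 1})\Rightarrow(\ref{main theorem 2})$ I induct on $\lvert V_{G}\rvert$; for $\lvert V_{G}\rvert\leq 2$ the arrangement has rank at most two and is divisionally free. Conditions $(\ref{I})$ and $(\ref{II})$ together make the underlying simple graph $\underline{G}$ chordal: an induced cycle of length $\geq 4$ in $\underline{G}$ would produce, inside $G$, an induced unbalanced cycle or a chordless balanced cycle, contradicting $(\ref{II})$ or $(\ref{I})$. Thus $\underline{G}$ has a simplicial vertex, and the point is to pick one, say $v$, that is moreover \emph{sign-compatible}: for all $u,w\in N(v)$ the Minkowski product of the sign sets of $\{v,u\}$ and $\{v,w\}$ is contained in the sign set of $\{u,w\}$ (note $N(v)$ is a clique). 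For such a $v$ one has $\mathcal{A}(G)^{\{x_{v}=0\}}=\mathcal{A}(G-v)$ on the nose, and the finite field method shows that the number of colours of $v$ forbidden by any proper signed colouring of $G-v$ equals the constant $b_{v}:=\lvert\mathcal{A}(G)\rvert-\lvert\mathcal{A}(G-v)\rvert$, whence $\chi(\mathcal{A}(G),t)=(t-b_{v})\,\chi(\mathcal{A}(G-v),t)$ and in particular $\chi(\mathcal{A}(G-v),t)$ divides $\chi(\mathcal{A}(G),t)$. Since $G-v$ again satisfies $(\ref{I})$--$(\ref{III})$, by induction $\mathcal{A}(G-v)$ is divisionally free, and Abe's division theorem (in the recursive form defining divisional freeness) then gives that $\mathcal{A}(G)$ is divisionally free.

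The main obstacle is thus the existence of a sign-compatible simplicial vertex, and this is precisely where $(\ref{II})$ and $(\ref{III})$ are used. If a simplicial vertex $v$ has no doubled edge incident to it, a failure of sign-compatibility at $v$ produces an induced unbalanced triangle on some $\{v,u,w\}$, contradicting $(\ref{II})$; so there every simplicial vertex is already sign-compatible. The delicate case is when doubled edges surround every simplicial vertex: here I would exploit the clique $N(v)$ (from chordality) together with $(\ref{I})$ to show that an unavoidable failure of sign-compatibility forces an induced subgraph switching equivalent to Figure~\ref{Fig:obstruction}, contradicting $(\ref{III})$. Apart from this, $(\ref{I})$ serves only to guarantee chordality and to reduce the balanced case of the necessity part to Theorem~\ref{previous theorem}.
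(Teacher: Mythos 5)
Your reductions for $(\ref{main theorem 2})\Rightarrow(\ref{main theorem 3})$ and most of $(\ref{main theorem 3})\Rightarrow(\ref{main theorem 1})$ are fine (the finite-field computation $\chi = (t-2)^{n}+(-1)^{n-1}$ for the unbalanced $n$-cycle is correct and matches the paper's $n=3$ case), although your reduction of a failure of (\ref{I}) to Theorem~\ref{previous theorem} does not work as stated: switching a chordless balanced cycle to be all-positive can leave single \emph{negative} chords on its vertex set, so the induced subgraph need not satisfy $G'^{+}\supseteq G'^{-}$; the paper instead invokes \cite[Lemma 4.7]{suyama2019signed-dm}, which holds for arbitrary signed graphs.

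The genuine gap is in $(\ref{main theorem 1})\Rightarrow(\ref{main theorem 2})$: your entire induction rests on the existence of a ``sign-compatible'' simplicial vertex (exactly the paper's \emph{link-simplicial} vertex), and you never prove it --- you only sketch how (\ref{II}) and (\ref{III}) ``would'' rule out local failures at a single simplicial vertex of the underlying chordal graph. This is the hard core of the theorem, and the local analysis you propose cannot suffice: the obstruction to having \emph{any} link-simplicial vertex is global, since each simplicial vertex of $\tilde{G}^{+}$ may fail for reasons on three or four vertices that are individually consistent with (\ref{I})--(\ref{III}). Note also that if your claim held, every arrangement in the class would acquire a link elimination ordering and hence be supersolvable by Theorem~\ref{zaslavsky ss}, a strictly stronger conclusion that the paper conspicuously does not assert. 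The paper's actual route is to build the auxiliary graph $\tilde{G}$ ($\tilde{G}^{+}=G^{+}\cup G^{-}$, $\tilde{G}^{-}=G^{+}\cap G^{-}$), prove via a lengthy case analysis (Proposition~\ref{AB double chord}, Lemma~\ref{G tilde balanced chordal}) that $\tilde{G}$ is balanced chordal, and then apply the dichotomy of Proposition~\ref{STT linksimp-gluing}: either $\tilde{G}$ has a link-simplicial vertex (which transfers back to $G$ by Proposition~\ref{simp tilde}, using (\ref{II})), \emph{or} $G$ decomposes as a modular join of two induced subgraphs over a complete signed graph $K_{n}^{\pm}$. Your induction has no provision for the second alternative, where no link-simplicial vertex is available and the factorization $\chi(\mathcal{A}(G),t)=(t-b_{v})\chi(\mathcal{A}(G\setminus v),t)$ is not at hand; the paper handles it with the divisional freeness of modular joins (Proposition~\ref{modular join}). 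Until you either prove that a link-simplicial vertex always exists under (\ref{I})--(\ref{III}) or supply an argument for the decomposition case, the proof is incomplete at its central step.
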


The organization of this article is as follows. 
In Section \ref{sec:preliminaries}, we review basic properties of freeness of hyperplane arrangements and we recall several results on graphic and signed-graphic arrangements. 
Moreover, we present necessary conditions for the freeness of a signed-graphic arrangement and we prove the implication $ (\ref{main theorem 3}) \Rightarrow (\ref{main theorem 1}) $ of Theorem \ref{main theorem}. 
In Section \ref{sec:tildeG}, we introduce a way to construct another signed graph $ \tilde{G} $ from a signed graph $ G $. 
The signed graph $ \tilde{G} $ satisfies the property $ \tilde{G}^{+} \supseteq \tilde{G}^{-} $ and hence we may apply Theorem \ref{previous theorem} to $ \tilde{G} $. 
In Section \ref{sec:tildeG is complete}, we prove Theorem \ref{main theorem} in the case $ \tilde{G}^{+} $ is complete. 
In Section \ref{sec:BC of tildeG}, we prove a key lemma which states that if $ G $ satisfies conditions (\ref{I})(\ref{II})(\ref{III}), then $ \tilde{G} $ is balanced chordal. This section is an essential part of this article. 
In Section \ref{sec:PROOF}, we prove the implication $ (\ref{main theorem 1}) \Rightarrow (\ref{main theorem 2}) $ of Theorem \ref{main theorem} and complete the proof of our main result.

\section{Preliminaries}\label{sec:preliminaries}
\subsection{Hyperplane arrangements}

Let $ \mathbb{K} $ be an arbitrary field, $ \mathcal{A} $ a central arrangement in the $ \ell $-dimensional vector space $ \mathbb{K}^{\ell} $. 
Our main reference on the theory of hyperplane arrangement is \cite{orlik1992arrangements}. 
Define the set $ L(\mathcal{A}) $ by 
\begin{align*}
L(\mathcal{A}) \coloneqq \Set{\bigcap_{H\in \mathcal{B}}H | \mathcal{B} \subseteq \mathcal{A}}. 
\end{align*}
Note that when $ \mathcal{B} $ is empty, the intersection over $ \mathcal{B} $ is the ambient vector space $ \mathbb{K}^{\ell} $. 
For subspaces $ X, Y \in L(\mathcal{A}) $, we define a partial order $ \leq $ by the reverse inclusion, that is, $ X \leq Y \stackrel{\text{def}}{\Leftrightarrow} X \supseteq Y $. 
The set $ L(\mathcal{A}) $ equipped with the partial order $ \leq $ forms a geometric lattice. 
We call $ L(\mathcal{A}) $ the \textbf{intersection lattice} of $ \mathcal{A} $. 
The \textbf{rank} of $ \mathcal{A} $, denoted by $ \rank(\mathcal{A}) $, is the codimension of the center $ \bigcap_{H \in \mathcal{A}}H $. 
The one-variable M\"{o}ebius function $ \mu \colon L(\mathcal{A}) \to \mathbb{Z} $ is defined recursively by 
\begin{align*}
\mu(\mathbb{K}^{\ell}) \coloneqq 1 
\quad \text{ and } \quad 
\mu(X) \coloneqq -\sum_{Y < X}\mu(Y). 
\end{align*}
The \textbf{characteristic polynomial } $\chi(\mathcal{A},t) $  of $\mathcal{A}$ is defined by 
\begin{align*}
\chi(\mathcal{A},t) \coloneqq \sum_{X \in L(\mathcal{A})}\mu(X)t^{\dim X}. 
\end{align*}

Let $ S $ be the ring of polynomial functions on $ \mathbb{K}^{\ell} $ and $ \Der(S) $ the module of derivations of $ S $. 
Namely, 
\begin{align*}
\Der(S) \coloneqq \Set{\theta \colon S \to S | \text{ $ \theta $ is $ \mathbb{K} $-linear and } \theta(fg) = \theta(f)g + f\theta(g) \text{ for any } f,g \in S}. 
\end{align*}
Define the \textbf{module of logarithmic derivations} by 
\begin{align*}
D(\mathcal{A}) \coloneqq \Set{\theta \in \Der(S) | \theta(\alpha_{H}) \in \alpha_{H}S \text{ for any } H \in \mathcal{A}}, 
\end{align*}
where $ \alpha_{H} \in (\mathbb{K}^{\ell})^{\ast} $ denotes a defining linear form of a hyperplane $ H \in \mathcal{A} $. 
Note that $ D(\mathcal{A}) $ is a graded $ S $-module. 
\begin{definition}
An arrangement $ \mathcal{A} $ is said to be \textbf{free} if $ D(\mathcal{A}) $ is a free $ S $-module. 
\end{definition}

For each hyperplane $ H \in \mathcal{A} $, we define the \textbf{restriction} $ \mathcal{A}^{H} $ by 
\begin{align*}
\mathcal{A}^{H} &\coloneqq \Set{H \cap K | K \in \mathcal{A}\setminus\{H\}}. 
\end{align*}
Note that $ \mathcal{A}^{H} $ is an arrangement in $ H $. 

There are several beautiful results on free arrangements. The following results played an important role in \cite{suyama2019signed-dm}.

\begin{theorem}[Abe {\cite[Theorem 1.1]{abe2016divisionally-im}} (Division Theorem)]\label{Abe division}
Let $ H \in \mathcal{A} $. 
Suppose that $ \mathcal{A}^{H} $ is free and $ \chi(\mathcal{A}^{H},t) $ divides $ \chi(\mathcal{A},t) $. 
Then $ \mathcal{A} $ is free. 
\end{theorem}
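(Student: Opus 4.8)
The divisibility hypothesis is more rigid than it first appears. Since $\chi(\mathcal{A},t)$ is monic of degree $\ell$ with $t^{\ell-1}$-coefficient $-|\mathcal{A}|$, while $\chi(\mathcal{A}^{H},t)$ is monic of degree $\ell-1$ with $t^{\ell-2}$-coefficient $-|\mathcal{A}^{H}|$, the quotient is forced to be the linear polynomial $t-(|\mathcal{A}|-|\mathcal{A}^{H}|)$. Thus the assumption is equivalent to the exact factorization
\[ \chi(\mathcal{A},t)=\bigl(t-(|\mathcal{A}|-|\mathcal{A}^{H}|)\bigr)\chi(\mathcal{A}^{H},t). \]
The plan is to show that, together with freeness of $\mathcal{A}^{H}$, this factorization forces $D(\mathcal{A})$ to be a free $S$-module, with exponents obtained by appending $|\mathcal{A}|-|\mathcal{A}^{H}|$ to those of $\mathcal{A}^{H}$.

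First I would replace freeness by a cohomological vanishing statement. The module $D(\mathcal{A})$ is always reflexive of rank $\ell$, and over the regular ring $S$ a finitely generated graded module is free iff it is maximal Cohen--Macaulay, i.e.\ of depth $\ell$, i.e.\ $H^{i}_{\mathfrak m}(D(\mathcal{A}))=0$ for all $i<\ell$. Reflexivity already supplies $H^{0}_{\mathfrak m}=H^{1}_{\mathfrak m}=0$, so the genuine content is the vanishing of the intermediate modules $H^{i}_{\mathfrak m}(D(\mathcal{A}))$ for $2\le i\le\ell-1$. Equivalently, for $\ell\ge 3$ the sheafification $\mathcal{E}$ of $D(\mathcal{A})$ on $\mathbb{P}^{\ell-1}$ is a reflexive, hence locally free, sheaf, and $\mathcal{A}$ is free precisely when $\mathcal{E}$ splits as a direct sum of line bundles.

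Next I would bring in the hyperplane $H$. Writing $\alpha=\alpha_{H}$, Terao's restriction sequence
\[ 0\longrightarrow D(\mathcal{A})\longrightarrow D(\mathcal{A}\setminus\{H\})\stackrel{\rho}{\longrightarrow}D(\mathcal{A}^{H}), \]
in which $\rho$ restricts a derivation to $H$, ties $D(\mathcal{A})$ to the restriction; freeness of $\mathcal{A}^{H}$ makes the target a free module over $S/\alpha S$, so geometrically $\mathcal{E}|_{\mathbb{P}(H)}$ splits. I would then run a Horrocks-type restriction argument: a bundle whose restriction to a hyperplane splits, and whose numerical invariants match those of a split bundle, itself splits. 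The numerical input comes from the characteristic polynomials via the Solomon--Terao formula, which expresses $\chi(\mathcal{A},t)$ and $\chi(\mathcal{A}^{H},t)$ through the graded Euler characteristics of $\mathcal{E}$ and $\mathcal{E}|_{\mathbb{P}(H)}$. The crux is that the difference between $\chi(\mathcal{A},t)$ and the split polynomial $(t-(|\mathcal{A}|-|\mathcal{A}^{H}|))\chi(\mathcal{A}^{H},t)$ is sign-definite and measures exactly the lengths of the intermediate local cohomology modules; the exact factorization above then makes this difference vanish, forcing those modules to be zero.

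The main obstacle is precisely this last comparison: proving that the defect $\chi(\mathcal{A},t)-(t-(|\mathcal{A}|-|\mathcal{A}^{H}|))\chi(\mathcal{A}^{H},t)$ is genuinely governed by $\bigoplus_{2\le i\le\ell-1}H^{i}_{\mathfrak m}(D(\mathcal{A}))$ with a definite sign, and that freeness of $\mathcal{A}^{H}$ clears every obstruction except the one detected by divisibility. This sign-definiteness is the technical heart of the argument and is where essentially all the work lies; the reflexivity bookkeeping and the restriction sequence in the earlier steps are comparatively formal. Finally I would dispose of the low-rank base cases $\ell\le 2$ separately, where every arrangement is free and the statement is immediate, and use them to anchor the induction on $\ell$ implicit in the Horrocks-type step.
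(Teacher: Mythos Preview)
The paper does not prove this theorem at all: it is quoted, without proof, as a result of Abe from \cite{abe2016divisionally-im}. There is therefore no ``paper's own proof'' to compare your proposal against.

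Evaluating your outline on its own terms: the opening observation that divisibility forces the exact factorization $\chi(\mathcal{A},t)=(t-(|\mathcal{A}|-|\mathcal{A}^{H}|))\,\chi(\mathcal{A}^{H},t)$ is correct and is indeed the starting point of Abe's argument. The reformulation of freeness as vanishing of intermediate local cohomology is also fine. However, the sentence ``the sheafification $\mathcal{E}$ of $D(\mathcal{A})$ on $\mathbb{P}^{\ell-1}$ is a reflexive, hence locally free, sheaf'' is wrong once $\ell\ge 4$: reflexive implies locally free only on surfaces. Local freeness of $\widetilde{D_{0}(\mathcal{A})}$ (equivalently, local freeness of $\mathcal{A}$ in codimension three along $H$) is not automatic and is in fact one of the two nontrivial ingredients in Abe's proof; it must be \emph{deduced} from the hypotheses, not assumed.

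More substantively, your ``sign-definiteness'' step is, as you yourself note, the entire content of the theorem, and Abe's argument does not run the way you sketch. Rather than a Horrocks restriction argument controlling $H^{i}_{\mathfrak m}$ directly, Abe passes through Ziegler's multirestriction $(\mathcal{A}^{H},m^{H})$ and Yoshinaga's freeness criterion, showing that (a) freeness of the simple restriction $\mathcal{A}^{H}$ together with the characteristic-polynomial divisibility forces the Ziegler multiarrangement $(\mathcal{A}^{H},m^{H})$ to be free with the expected exponents, and (b) the same hypotheses force local freeness in codimension three. The Solomon--Terao formula enters only indirectly. Your outline gestures toward the right cohomological target but supplies neither the mechanism for (a) nor for (b), so as written it is a plausible plan rather than a proof.
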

Abe's division theorem leads to the notion of divisional freeness. 
\begin{definition}
\textbf{Divisional freeness} is defined recursively by the following rules. 
\begin{enumerate}[(1)]
\item The empty arrangements are divisionally free. 
\item If there exists $ H \in \mathcal{A} $ such that $ \mathcal{A}^{H} $ is divisionally free and $ \chi(\mathcal{A}^{H},t) $ divides $ \chi(\mathcal{A},t) $, then $ \mathcal{A} $ is divisionally free. 
\end{enumerate}
\end{definition}

Another important notion in the theory of arrangements is supersolvability. We say that $ \mathcal{A} $ is \textbf{supersolvable} if the intersection lattice $ L(\mathcal{A}) $ is supersolvable, that is, there exists a maximal chain of $ L(\mathcal{A}) $ consisting of modular elements (see \cite{stanley1972supersolvable-au} for more details). 
It is well known that every supersolvable arrangement is (divisionally) free (see \cite[Theorem 4.2]{jambu1984free-aim} and \cite[Theorem 4.4(2)]{abe2016divisionally-im} ). 

\subsection{Simple graphs and graphic arrangements}
Let $ G=(V_{G},E_{G}) $ be a simple graph. 
Given a subset $ W \subseteq V_{G} $, let $ G[W] $ denote the induced subgraph on $ W $. 
A \textbf{chord} of a cycle in $ G $ is an edge connecting two non-consecutive vertices of the cycle. 
We say that $ G $ is \textbf{chordal} if every cycle of length four or more has a chord. 
A vertex of $ G $ is called \textbf{simplicial} if its neighborhood is a clique, that is, a complete subgraph. 
An ordering $ (v_{1}, \dots, v_{\ell}) $ of the vertices of $ G $ is called a \textbf{perfect elimination ordering} if $ v_{i} $ is simplicial in the induced subgraph $ G[\{v_{1}, \dots, v_{i}\}] $ for each $ i \in \{1, \dots, \ell\} $. 

For a simple graph $ G $ on the vertex set $ \{1,\dots, \ell\} $, we define the \textbf{graphic arrangement} $ \mathcal{A}(G) $ in $ \mathbb{K}^{\ell} $ by 
\begin{align*}
\mathcal{A}(G) \coloneqq \Set{\{x_{i}-x_{j}=0 \} | \{i,j\} \in E_{G}}. 
\end{align*}
Note that if we identify a simple graph $ G $ on $ \ell $ vertices with a signed graph $ (G, \overline{K}_{\ell}) $, where $ \overline{K}_{\ell} $ denotes the edgeless graph on $ \ell $ vertices, then the graphic arrangement $ \mathcal{A}(G) $ coincides with the signed-graphic arrangement $ \mathcal{A}(G,\overline{K}_{\ell}) $. 

Characterizations of freeness of graphic arrangements are well known as follows. 
\begin{theorem}[Edelman-Reiner {\cite[Theorem 3.3]{edelman1994free-mz}}, Fulkerson-Gross {\cite[Section 7]{fulkerson1965incidence-pjom}}, Stanley {\cite[Corollary 4.10]{stanley2004introduction-lnicmi}}]
Let $ G $ be a simple graph. 
Then the following conditions are equivalent: 
\begin{enumerate}[(1)]
\item $ G $ is chordal. 
\item $ G $ has a perfect elimination ordering. 
\item $ \mathcal{A}(G) $ is supersolvable. 
\item $ \mathcal{A}(G) $ is free. 
\end{enumerate}
\end{theorem}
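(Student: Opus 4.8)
The plan is to prove the cycle of implications
$(1)\Rightarrow(2)\Rightarrow(3)\Rightarrow(4)\Rightarrow(1)$,
which establishes the equivalence of all four conditions for a simple graph $ G $.
The first two implications are purely graph-theoretic, the third is the
bridge to arrangements, and the last is where the real arrangement-theoretic
content (and the main obstacle) lies.

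\textbf{The graph-theoretic steps $(1)\Rightarrow(2)$ and $(2)\Rightarrow(1)$.}
For $(1)\Rightarrow(2)$ I would first recall the classical fact (Dirac's lemma) that every
chordal graph with at least one vertex has a simplicial vertex; the standard proof
proceeds by induction on the number of vertices using a minimal
vertex separator, observing that two vertices in different components of
$ G\setminus S $ for a minimal clique separator $ S $ cannot both fail to be simplicial.
Given this, I would build a perfect elimination ordering in reverse: choose a
simplicial vertex $ v_{\ell} $, note that $ G[\{1,\dots,\ell\}\setminus\{v_{\ell}\}] $
is again chordal (chordality is inherited by induced subgraphs), and recurse.
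This yields an ordering $ (v_{1},\dots,v_{\ell}) $ with each $ v_{i} $ simplicial
in $ G[\{v_{1},\dots,v_{i}\}] $. The converse $(2)\Rightarrow(1)$ is short: if $ C $ is
a cycle of length at least four, let $ v_{i} $ be the vertex of $ C $ with the
largest index in the perfect elimination ordering; then the two neighbors of
$ v_{i} $ along $ C $ lie in $ \{v_{1},\dots,v_{i}\} $ and, since $ v_{i} $ is
simplicial there, they are adjacent, producing a chord of $ C $.

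\textbf{The arrangement step $(2)\Rightarrow(3)$.}
Here I would use a perfect elimination ordering to exhibit a maximal chain of
modular elements in $ L(\mathcal{A}(G)) $, thereby proving supersolvability directly.
Reversing the ordering, set $ X_{k} $ to be the flat obtained by intersecting all
hyperplanes $ \{x_{i}=x_{j}\} $ with $ i,j\in\{v_{\ell-k+1},\dots,v_{\ell}\} $ edges
of $ G $; equivalently, one adds back the vertices one at a time. The key point
is that when $ v_{i} $ is simplicial, its neighborhood in
$ G[\{v_{1},\dots,v_{i}\}] $ is a clique, and this cliqueness is exactly what makes
the corresponding rank-one extension a \emph{modular} coatom in the relevant
interval of the lattice; the modularity condition
$ \operatorname{rank}(X\wedge Y)+\operatorname{rank}(X\vee Y)=\operatorname{rank}(X)+\operatorname{rank}(Y) $
translates into the statement that the edges among a clique impose independent
conditions in a controlled way. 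I would verify this by tracking ranks of the flats
corresponding to the partitions of the vertex set induced by connected components.

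\textbf{The step $(3)\Rightarrow(4)$ and the main obstacle.}
The implication $(3)\Rightarrow(4)$ is immediate from the cited general fact that every
supersolvable arrangement is free (Jambu--Terao), so no separate argument is needed.
The genuinely hard direction, and the main obstacle, is $(4)\Rightarrow(1)$: freeness
must force chordality. The natural approach is the contrapositive — assume $ G $
contains an induced chordless cycle $ C_{n} $ with $ n\ge 4 $ and show
$ \mathcal{A}(G) $ is not free. For this I would invoke Terao's factorization theorem,
which says that if $ \mathcal{A}(G) $ is free with exponents
$ (d_{1},\dots,d_{\ell}) $ then
$ \chi(\mathcal{A}(G),t)=\prod_{i}(t-d_{i}) $ factors completely over the integers.
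The strategy is then to compute the characteristic polynomial of the graphic
arrangement of an induced $ n $-cycle (using deletion--restriction or the
known formula relating $ \chi $ to the chromatic polynomial of $ G $) and exhibit a
non-real or non-integral root, contradicting free-ness; propagating this obstruction
from the induced subarrangement to the full arrangement requires the fact that
freeness is inherited by localizations at flats, so that the presence of a bad
induced cycle already obstructs freeness. The delicate part is ensuring the
non-integrality of the roots survives for every $ n\ge 4 $ and correctly reducing to the
induced subarrangement, and this is where I would spend the most care.
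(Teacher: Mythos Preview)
The paper does not prove this theorem: it is stated in the preliminaries as a known result, with citations to Edelman--Reiner, Fulkerson--Gross, and Stanley, and no proof is given. So there is no in-paper argument to compare your proposal against; your outline is simply the classical proof that those references contain.

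That said, a few comments on the outline itself. The graph-theoretic steps $(1)\Leftrightarrow(2)$ are fine. In $(2)\Rightarrow(3)$ there is a small inconsistency: you reverse the ordering to build the flats $X_k$ from $\{v_{\ell-k+1},\dots,v_\ell\}$, but then invoke simpliciality of $v_i$ in $G[\{v_1,\dots,v_i\}]$, which controls neighbours with \emph{smaller} index; the clique condition you need for modularity must match the side on which you are adding vertices, so one of the two should be flipped. For $(4)\Rightarrow(1)$ your plan is correct: for a connected induced subgraph $G[W]$, the arrangement $\mathcal{A}(G[W])$ is (up to a trivial direct factor) the localization of $\mathcal{A}(G)$ at the flat $\{x_i=x_j:i,j\in W\}$, so freeness passes to it; the paper itself records the signed-graph analogue of exactly this step as Proposition~\ref{free => locally free}. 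The remaining computation is clean once you use $\chi(\mathcal{A}(C_n),t)=(t-1)^n+(-1)^n(t-1)=(t-1)\bigl((t-1)^{n-1}-(-1)^{n-1}\bigr)$: for $n\ge 4$ the second factor has $t-1$ equal to a primitive root of $\pm 1$ and hence a non-real root, so Terao's factorization theorem rules out freeness.
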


Together with chordal graphs, there is another important family of graphs.
\begin{definition}
\textbf{Threshold graphs} are defined recursively by the following conditions: 
\begin{enumerate}[(1)]
\item The single-vertex graph $ K_{1} $ is threshold. 
\item If $ G $ is threshold, then the graph obtained by adding an isolated vertex to $ G $ is threshold. 
\item If $ G $ is threshold, then the graph obtained by adding a dominating vertex to $ G $ is threshold, where a vertex is said to be \textbf{dominating} if it is adjacent to the other vertices. 
\end{enumerate}
\end{definition}

The \textbf{degree} of a vertex $ v $ of $ G $, denoted $ \deg_{G}(v) $, is the number of vertices adjacent to $ v $. 
An ordering $ (v_{1}, \dots, v_{\ell}) $ of the vertices of $ G $ is called a \textbf{degree ordering} if $ \deg_{G}(v_{1}) \geq \dots \geq \deg_{G}(v_{\ell}) $. 
Note that if $ G $ is threshold, then we have either $ v_{1} $ is dominating or $ v_{\ell} $ is isolated in every degree ordering. 

Threshold graphs can also be characterized by forbidden induced subgraphs. 
\begin{theorem}[Golumbic {\cite[Corollary 5]{golumbic1978trivially-dm}}]\label{forbidden induced subgraphs threshold}
A simple graph $ G $ is threshold if and only if $ G $ is $ (2K_{2}, C_{4}, P_{4}) $-free, that is, $ G $ has no induced subgraphs isomorphic to these three graphs (see Figure \ref{Fig:threshold}). 
\end{theorem}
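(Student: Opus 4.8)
The plan is to prove both implications directly. For the forward direction, I would proceed by induction on the number of vertices of a threshold graph $G$, using the recursive definition. The base case $K_{1}$ is trivially $(2K_{2}, C_{4}, P_{4})$-free since each of these forbidden graphs has at least four vertices. For the inductive step, suppose $G$ is obtained from a threshold graph $G'$ by adding a vertex $v$ that is either isolated or dominating, and suppose for contradiction that $G$ contains an induced subgraph $H$ isomorphic to one of $2K_{2}$, $C_{4}$, $P_{4}$. If $v \notin V_{H}$, then $H$ is an induced subgraph of $G'$, contradicting the inductive hypothesis. So $v \in V_{H}$. The key observation is that in each of $2K_{2}$, $C_{4}$, $P_{4}$, every vertex has degree $1$ or $2$ within the subgraph — in particular no vertex is isolated in $H$ and no vertex is adjacent to all three others. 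If $v$ is isolated in $G$, then $v$ is isolated in $H$, a contradiction; if $v$ is dominating in $G$, then $v$ is adjacent to the other three vertices of $H$, again a contradiction. This completes the forward direction.

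For the converse, I would show the contrapositive is avoided by induction on $|V_{G}|$: assuming $G$ is $(2K_{2}, C_{4}, P_{4})$-free, I want to produce either an isolated vertex or a dominating vertex, so that deleting it yields a smaller $(2K_{2}, C_{4}, P_{4})$-free graph (the property is hereditary), to which the inductive hypothesis applies. The natural candidate is to take a vertex $v$ of maximum degree and argue it is dominating, or, if not, look at a vertex of minimum degree and argue it is isolated. Concretely, let $v$ have maximum degree and suppose $v$ is not dominating, so there is a vertex $u$ not adjacent to $v$. I would then analyze the neighborhoods $N(v)$ and $N(u)$: forbidding $P_{4}$ and $C_{4}$ forces strong comparability between closed or open neighborhoods of non-adjacent vertices, and forbidding $2K_{2}$ prevents two disjoint edges. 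The goal is to show that under these constraints some vertex must have empty neighborhood. This is essentially the classical proof that threshold graphs are exactly the graphs in which the neighborhoods are totally ordered by inclusion ("nested neighborhood" property), and the forbidden-subgraph conditions are precisely what guarantees this nesting.

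The main obstacle I anticipate is the case analysis in the converse direction: carefully ruling out all configurations using only the three forbidden subgraphs requires checking several subcases depending on the adjacency pattern among $v$, $u$, and their neighbors. A cleaner route, which I would probably adopt to streamline the argument, is to first prove the intermediate lemma that a graph is $(2K_{2}, C_{4}, P_{4})$-free if and only if its vertex set admits a linear order $v_{1}, \dots, v_{\ell}$ under which $N(v_{1}) \subseteq N(v_{2}) \subseteq \dots$ (after suitable handling of adjacency among the $v_{i}$ themselves), and then observe that such a "nested" structure immediately exhibits either a dominating vertex (the last one, if its neighborhood is everything else) or an isolated vertex (the first one, if its neighborhood is empty), yielding the recursive construction. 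Since this result is classical (due to Golumbic), I would keep the exposition brief and cite the standard argument, spelling out only the forward direction in full as above.
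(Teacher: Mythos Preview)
The paper does not prove this theorem at all: it is stated as a classical result and attributed to Golumbic \cite[Corollary~5]{golumbic1978trivially-dm}, with no proof given. Your proposal therefore goes beyond what the paper does, supplying an actual argument where the paper simply cites the literature.

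As for the content of your proposal, the forward direction is clean and correct. For the converse, your sketch is on the right track but not yet a proof: you correctly identify that the goal is to locate an isolated or dominating vertex, and you gesture at the nested-neighborhood characterization, but the passage ``forbidding $P_{4}$ and $C_{4}$ forces strong comparability between closed or open neighborhoods of non-adjacent vertices, and forbidding $2K_{2}$ prevents two disjoint edges'' is an assertion, not an argument. If you intend to include a self-contained proof rather than a citation, you should actually carry out the case analysis: for instance, show directly that if $u$ and $w$ are non-adjacent and neither $N(u)\subseteq N(w)$ nor $N(w)\subseteq N(u)$, then picking $a\in N(u)\setminus N(w)$ and $b\in N(w)\setminus N(u)$ yields an induced $2K_{2}$, $C_{4}$, or $P_{4}$ on $\{u,w,a,b\}$ depending on whether $a$ and $b$ are adjacent. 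That, together with a similar comparison of closed neighborhoods for adjacent pairs, gives the total order you need. Since the paper is content to cite Golumbic, the simplest fix is to do the same.
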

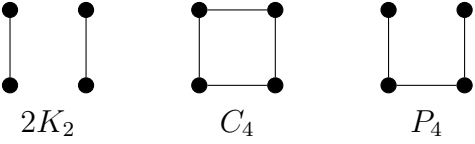
\begin{figure}[t]
\centering
\begin{tikzpicture}
\draw (0,1) node[v](1){};
\draw (0,0) node[v](2){};
\draw (1,0) node[v](3){};
\draw (1,1) node[v](4){};
\draw (0.5,-0.5) node[]{$ 2K_{2} $};
\draw (1)--(2);
\draw (3)--(4);
\end{tikzpicture}
\hspace{10mm}
\begin{tikzpicture}
\draw (0,1) node[v](1){};
\draw (0,0) node[v](2){};
\draw (1,0) node[v](3){};
\draw (1,1) node[v](4){};
\draw (0.5,-0.5) node[]{$ C_{4} $};
\draw (1)--(2)--(3)--(4)--(1);
\end{tikzpicture}
\hspace{10mm}
\begin{tikzpicture}
\draw (0,1) node[v](1){};
\draw (0,0) node[v](2){};
\draw (1,0) node[v](3){};
\draw (1,1) node[v](4){};
\draw (0.5,-0.5) node[]{$ P_{4} $};
\draw (1)--(2)--(3)--(4);
\end{tikzpicture}
\caption{Forbidden induced subgraphs for threshold graphs}\label{Fig:threshold}
\end{figure} 

\subsection{Signed graphs and signed-graphic arrangements}
Given a signed graph $ G $ and a subset $ W \subseteq V_{G} $, we call $ G[W] \coloneqq (G^{+}[W],G^{-}[W]) $ the \textbf{induced subgraph} on $ W $. 
For a vertex $ v \in V_{G} $, let $ G\setminus v $ denote the induced subgraph on $ V_{G}\setminus \{v\} $. 
Call an edge $ e \in E_{G}^{+} \cup E_{G}^{-} $ \textbf{double} if $ e \in E_{G}^{+} \cap E_{G}^{-} $, otherwise call $ e $ \textbf{single}. 
A \textbf{path} of $ G $ is a sequence of distinct vertices $ v_{1}, \dots, v_{k} $ with positive or negative edges $ \{v_{i}, v_{i+1}\}  \quad (1 \leq i \leq k-1) $. 
For $ k \geq 3 $, a \textbf{cycle} of length $ k $ of $ G $ is a path $ v_{1}, \dots, v_{k} $ with an additional edge $ \{v_{1}, v_{k} \} $. 
We use brackets for describing the negative parts of paths and cycles (see Figure \ref{Fig:example of notation of paths} for examples). 
\begin{figure}[t]
\centering
\begin{tikzpicture}
\draw (0,0) node[v,label=above:{$ a $}](1){};
\draw (1,0) node[v,label=above:{$ b $}](2){};
\draw (2,0) node[v,label=above:{$ c $}](3){};
\draw (3,0) node[v,label=above:{$ d $}](4){};
\draw (1)--(2)--(3)--(4);
\draw (1.5,-.5) node(5){$ abcd $};
\end{tikzpicture}
\qquad
\begin{tikzpicture}
\draw (0,0) node[v,label=above:{$ a $}](1){};
\draw (1,0) node[v,label=above:{$ b $}](2){};
\draw (2,0) node[v,label=above:{$ c $}](3){};
\draw (3,0) node[v,label=above:{$ d $}](4){};
\draw[dashed] (1)--(2)--(3);
\draw (3)--(4);
\draw (1.5,-.5) node(5){$ [abc]d $};
\end{tikzpicture}
\qquad
\begin{tikzpicture}
\draw (0,0) node[v,label=above:{$ a $}](1){};
\draw (1,0) node[v,label=above:{$ b $}](2){};
\draw (2,0) node[v,label=above:{$ c $}](3){};
\draw (3,0) node[v,label=above:{$ d $}](4){};
\draw[dashed] (1)--(2);
\draw (2)--(3);
\draw[dashed] (3)--(4);
\draw (1.5,-.5) node(5){$ [ab][cd] $};
\end{tikzpicture}
\caption{Examples of notation of paths}\label{Fig:example of notation of paths}
\end{figure}
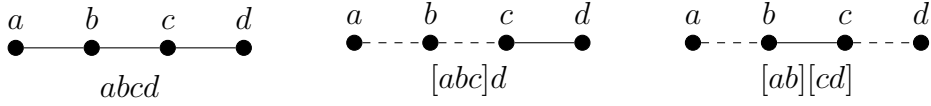

A \textbf{chord} of a cycle is an edge between two non-consecutive vertices in the cycle. 
A cycle $ C $ is called \textbf{balanced} if $ C $ has an even number of negative edges, otherwise $ C $ is called \textbf{unbalanced}. 
A \textbf{balanced chord} of a balanced cycle $ C $ is a chord which separates $ C $ into two balanced cycles. 
\begin{definition}
A signed graph $ G $ is said to be \textbf{balanced chordal} if every balanced cycle of length at least four has a balanced chord. 
\end{definition}
Balanced chordality is a generalization of chordality of simple graphs. 
Namely, if $ G $ is a chordal simple graph on $ \ell $ vertices, then the signed graph $ (G, \overline{K}_{\ell}) $ is balanced chordal. 
We have the following proposition describing the relation of such graphs with freeness. 
\begin{proposition}[{\cite[Lemma 4.7]{suyama2019signed-dm}}]\label{free => I}
If a signed-graphic arrangement $ \mathcal{A}(G) $ is free, then $ G $ is balanced chordal. 
\end{proposition}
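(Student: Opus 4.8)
The plan is to prove the contrapositive: if $ G $ is not balanced chordal, then $ \mathcal{A}(G) $ is not free. So suppose $ G $ contains a balanced cycle $ C = v_{1}v_{2}\cdots v_{k} $ of length $ k \geq 4 $, on the vertex set $ W = \{v_{1},\dots,v_{k}\} $, with no balanced chord. First I would reduce to the all-positive case. Switching at a vertex $ v $ (interchanging the positive and negative labels of every link at $ v $) is induced by the linear automorphism $ x_{v}\mapsto -x_{v} $ of $ \mathbb{K}^{\ell} $, hence it preserves freeness of $ \mathcal{A}(G) $ and preserves the parity of the number of negative edges of every cycle, so it preserves the balance of every cycle. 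Since $ C $ is balanced, after a suitable sequence of switchings every edge of $ C $ becomes positive, and none of the relevant hypotheses has changed (freeness of $ \mathcal{A}(G) $; $ C $ balanced of length $ \geq 4 $; $ C $ has no balanced chord). Now a positive chord of the all-positive cycle $ C $ would split $ C $ into two all-positive, hence balanced, cycles, and so would be a balanced chord; as $ C $ has none, $ C $ has no positive chord. Thus $ C $ is a chordless cycle of the simple graph $ G^{+} $, and in particular $ G^{+}[W] $ is not chordal, having $ C $ as an induced cycle of length at least four.

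Next I would pass to a localization. For a flat $ X\in L(\mathcal{A}) $ write $ \mathcal{A}_{X}\coloneqq \Set{H\in\mathcal{A} | X\subseteq H} $, and recall that localizations of free arrangements are free (see \cite{orlik1992arrangements}). Consider $ Y\coloneqq\{x_{v_{1}}=x_{v_{2}}=\cdots=x_{v_{k}}\}=\bigcap_{i=1}^{k-1}\{x_{v_{i}}-x_{v_{i+1}}=0\} $, which lies in $ L(\mathcal{A}(G)) $ because the edges $ \{v_{i},v_{i+1}\} $ of $ C $ are positive. Determining which hyperplanes of $ \mathcal{A}(G) $ contain $ Y $: the common value of the coordinates $ x_{v_{1}},\dots,x_{v_{k}} $ is a free parameter on $ Y $, so no coordinate hyperplane contains $ Y $; for the same reason, and since $ \mathbb{K} $ has characteristic zero, no hyperplane $ \{x_{a}+x_{b}=0\} $ contains $ Y $; and $ \{x_{a}-x_{b}=0\}\supseteq Y $ if and only if $ a,b\in W $. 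Hence $ \mathcal{A}(G)_{Y} $ consists exactly of the hyperplanes $ \{x_{a}-x_{b}=0\} $ with $ \{a,b\} $ a positive edge of $ G[W] $; since these involve only the coordinates indexed by $ W $, $ \mathcal{A}(G)_{Y} $ is linearly isomorphic to the product of the graphic arrangement $ \mathcal{A}(G^{+}[W]) $ with an empty arrangement, and this product is free if and only if $ \mathcal{A}(G^{+}[W]) $ is free.

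Finally I would conclude. Since $ G^{+}[W] $ contains the chordless cycle $ C $ of length at least four, it is not chordal, so by the characterization of freeness of graphic arrangements recalled above, $ \mathcal{A}(G^{+}[W]) $ is not free; therefore $ \mathcal{A}(G)_{Y} $ is not free, and by the localization principle $ \mathcal{A}(G) $ is not free. This proves the contrapositive, hence the proposition. I expect the only delicate point to be bookkeeping: checking that the balance-invariance of switching genuinely transports the hypothesis "$ C $ has no balanced chord" into "$ C\subseteq G^{+} $ is chordless", and verifying the identification of $ \mathcal{A}(G)_{Y} $ above. One could alternatively compute $ \chi(\mathcal{A}(G),t) $ by deletion--restriction and invoke Terao's factorization theorem to exclude freeness, but that route is computationally heavier, whereas the localization argument reduces everything to the known graphic case.
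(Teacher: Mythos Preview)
The paper does not give its own proof of this proposition; it is quoted verbatim from \cite[Lemma~4.7]{suyama2019signed-dm} with no argument supplied. So there is nothing in the present paper to compare against.

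Your proof is correct and follows exactly the natural localization argument one would expect in the cited reference: switch so that the offending balanced cycle $C$ becomes all-positive (possible precisely because $C$ is balanced), observe that the absence of balanced chords then forbids \emph{positive} chords (a positive chord of an all-positive cycle splits it into two balanced cycles), localize $\mathcal{A}(G)$ at the flat $Y=\{x_{v_{1}}=\cdots=x_{v_{k}}\}$ to obtain the graphic arrangement of $G^{+}[W]$, and conclude via the standard facts that localizations of free arrangements are free and that graphic arrangements are free only for chordal graphs. Your identification of $\mathcal{A}(G)_{Y}$ is correct: coordinate hyperplanes and negative-edge hyperplanes are excluded because the common value of the $x_{v_i}$ is a free parameter on $Y$ and $\operatorname{char}\mathbb{K}=0$.

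One cosmetic remark: your sentence ``$G^{+}[W]$ is not chordal, having $C$ as an induced cycle'' undersells what you actually proved. Since $C$ has no positive chord whatsoever, $G^{+}[W]$ \emph{is} the cycle $C_{k}$, not merely contains it as an induced subgraph; this makes the non-freeness of the localized arrangement even more immediate.
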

\begin{proposition}[{\cite[Proposition 4.6]{suyama2019signed-dm}}]\label{free => locally free}
Let $ F $ be an induced subgraph of a signed graph $ G $. 
If  $ \mathcal{A}(G) $ is free, then $ \mathcal{A}(F) $ is free. 
\end{proposition}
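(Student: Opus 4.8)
The plan is to realize $\mathcal{A}(F)$ as a localization of $\mathcal{A}(G)$ and then apply the classical fact that every localization of a free arrangement is free (see \cite{orlik1992arrangements}).

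Write $V_G = \{1,\dots,\ell\}$ and let $W \subseteq V_G$ be the vertex set of $F$, so that $F = G[W]$. Since we always assume that $\mathcal{A}(G)$ contains every coordinate hyperplane $\{x_i = 0\}$, the subspace $X \coloneqq \bigcap_{i \in W}\{x_i = 0\}$ belongs to $L(\mathcal{A}(G))$. I would then compute the localization $\mathcal{A}(G)_X \coloneqq \{H \in \mathcal{A}(G) \mid X \subseteq H\}$. A hyperplane $\{\alpha = 0\}$ of $\mathcal{A}(G)$ contains $X$ exactly when the defining form $\alpha$ vanishes on $X$, i.e. when $\alpha$ lies in the span of $\{x_i \mid i \in W\}$. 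Among the defining forms of $\mathcal{A}(G)$ this happens precisely for $x_i$ with $i \in W$, for $x_i - x_j$ with $\{i,j\} \in E_F^+$, and for $x_i + x_j$ with $\{i,j\} \in E_F^-$. Hence $\mathcal{A}(G)_X$ consists exactly of these hyperplanes, and none of the coordinates $x_j$ with $j \notin W$ occurs in any of their defining forms.

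It then remains to transfer freeness. Since $\mathcal{A}(G)$ is free, the localization $\mathcal{A}(G)_X$ is free. By the previous paragraph, $\mathcal{A}(G)_X$ is obtained from $\mathcal{A}(F)$ (viewed in $\mathbb{K}^{|W|}$) by adjoining the $\ell - |W|$ trivial directions $x_j$, $j \notin W$; equivalently, $\mathcal{A}(G)_X$ is the product of $\mathcal{A}(F)$ with an empty arrangement. As freeness is preserved under taking products and under discarding such trivial directions, it follows that $\mathcal{A}(F)$ is free.

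The argument is mostly bookkeeping, and the only points that need care are to localize rather than to restrict — restrictions $\mathcal{A}^H$ of free arrangements need not in general be free — and to keep track of the trivial ambient directions, so that freeness of $\mathcal{A}(G)_X$ as an arrangement in $\mathbb{K}^{\ell}$ genuinely yields freeness of $\mathcal{A}(F)$ in $\mathbb{K}^{|W|}$. Alternatively, one can proceed one vertex at a time: localizing $\mathcal{A}(G)$ at the coordinate axis $\bigcap_{i \neq v}\{x_i = 0\}$ produces $\mathcal{A}(G\setminus v)$ up to the trivial $x_v$-direction, and induction on $|V_G| - |V_F|$ finishes the proof.
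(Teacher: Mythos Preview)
Your argument is correct and is the standard localization approach: realize $\mathcal{A}(F)$ as $\mathcal{A}(G)_X$ for $X=\bigcap_{i\in W}\{x_i=0\}$, invoke the fact that localizations of free arrangements are free (Orlik--Terao, \cite{orlik1992arrangements}, Theorem~4.37), and strip off the trivial directions using the product decomposition. Every step is sound, including your care in distinguishing localization from restriction and in handling the ambient dimension.

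There is nothing to compare against in this paper: the proposition is quoted verbatim from \cite[Proposition~4.6]{suyama2019signed-dm} without proof. For what it is worth, the proof in \cite{suyama2019signed-dm} follows exactly the localization route you give (though phrased for signed graphs with loops rather than with all coordinate hyperplanes present), so your proposal matches both the spirit and the letter of the cited argument.
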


As for the concept of balanced chordal signed graphs, also the notion of simplicial vertices can be generalized to signed graphs.
\begin{definition}
A vertex $ v $ of $ G $ is said to be \textbf{link simplicial} if, for any two distinct edges incident to $ v $, there exists an edge such that they form a balanced triangle. 
An ordering $ (v_{1} \dots, v_{\ell}) $ of the vertices of $ G $ is called a \textbf{link elimination ordering} if $ v_{i} $ is link simplicial in the induced subgraph $ G[\{v_{1}, \dots, v_{i}\}] $ for each $ i \in \{1, \dots, \ell\} $. 
\end{definition}
\begin{remark}
Note that a vertex is link simplicial if and only if it is bias simplicial (or signed simplicial) in the signed graph with a loop at every vertex (see \cite{zaslavsky2001supersolvable-ejoc, suyama2019signed-dm}). 
\end{remark}

Zaslavsky completely characterized supersolvability of signed-graphic arrangements. 
However, we need only the following result in this article. 
\begin{theorem}[Zaslavsky {\cite[Theorem 2.2]{zaslavsky2001supersolvable-ejoc}}]\label{zaslavsky ss}
Suppose that a signed graph $ G $ has a link elimination ordering. 
Then $ \mathcal{A}(G) $ is supersolvable. 
\end{theorem}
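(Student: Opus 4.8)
\section*{Proof proposal}

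The plan is to prove supersolvability by induction on the number of vertices $\ell$, peeling off the last vertex of the link elimination ordering and invoking the standard reduction of supersolvability to a single modular coatom. Recall (Stanley \cite{stanley1972supersolvable-au}) that a geometric lattice $L$ is supersolvable precisely when it admits a maximal chain of modular elements, and that to produce such a chain it suffices to exhibit one modular coatom $X$ (a modular element of rank $\rank(L)-1$) for which the interval $[\hat{0},X]$ is itself supersolvable: a modular maximal chain of $[\hat{0},X]$ completed by $\hat{1}$ on top then gives one for $L$. So let $(v_{1},\dots,v_{\ell})$ be a link elimination ordering of $G$, and suppose inductively that $\mathcal{A}(G\setminus v_{\ell})$ is supersolvable, the base case $\ell=1$ being the single hyperplane $\{x_{1}=0\}$. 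Since $(v_{1},\dots,v_{\ell-1})$ is a link elimination ordering of $G\setminus v_{\ell}$, the inductive hypothesis applies.

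First I would identify the candidate coatom. Because $\mathcal{A}(G)$ contains every coordinate hyperplane, $\rank(\mathcal{A}(G))=\ell$, and the coordinate line $X\coloneqq\{x_{1}=\cdots=x_{\ell-1}=0\}$ (the $x_{\ell}$-axis) is the intersection of the old coordinate hyperplanes, hence a flat of rank $\ell-1$, i.e.\ a coatom. A hyperplane of $\mathcal{A}(G)$ contains $X$ if and only if its defining form does not involve $x_{\ell}$, so the hyperplanes lying above $X$ are exactly those of $\mathcal{A}(G\setminus v_{\ell})$, regarded as cylinders in $\mathbb{K}^{\ell}$. Consequently the interval $[\hat{0},X]$ of $L(\mathcal{A}(G))$ is isomorphic to $L(\mathcal{A}(G\setminus v_{\ell}))$, which is supersolvable by induction. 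Thus everything reduces to the single claim that $X$ is a modular element of $L(\mathcal{A}(G))$.

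To verify modularity of the coatom $X$ I would use the geometric criterion that a coatom of a geometric lattice is modular if and only if every rank-two flat meets it; in arrangement terms this says that for any two hyperplanes $H,H'$ of $\mathcal{A}(G)$ whose defining forms involve $x_{\ell}$, the codimension-two flat $H\cap H'$ is contained in some hyperplane of $\mathcal{A}(G)$ whose defining form does not involve $x_{\ell}$. The hyperplanes involving $x_{\ell}$ are the coordinate hyperplane $\{x_{\ell}=0\}$ and the edge hyperplanes $\{x_{i}-x_{\ell}=0\}$, $\{x_{i}+x_{\ell}=0\}$ coming from the links at $v_{\ell}$, so the criterion is checked case by case. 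When $H,H'$ come from two genuine links at $v_{\ell}$, a short computation shows that the required old hyperplane exists exactly when there is a third edge on the two other endpoints of the appropriate sign: two positive links at $i,j$ force the positive edge $\{x_{i}-x_{j}=0\}$, a positive and a negative link force the negative edge $\{x_{i}+x_{j}=0\}$, and likewise for two negative links. In each case the needed sign is precisely the one making the corresponding triangle balanced, so the condition is exactly that $v_{\ell}$ be link simplicial. The pairs involving the coordinate hyperplane $\{x_{\ell}=0\}$, as well as those coming from a double link, are automatic, since $H\cap H'$ then lies on a coordinate hyperplane $\{x_{i}=0\}$, which is always present; this matches the loop interpretation of link simpliciality recorded in the Remark, in which $\{x_{\ell}=0\}$ plays the role of a loop at $v_{\ell}$ and the loop-triangles are always balanced.

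The main obstacle I anticipate is stating and applying the modular-coatom criterion in exactly the right form under the paper's reverse-inclusion convention, so that $\hat{0}=\mathbb{K}^{\ell}$, joins are subspace intersections, and ranks are codimensions, and then being scrupulous in the case analysis. In particular one must correctly incorporate the coordinate hyperplane at $v_{\ell}$, which is what forces the notion ``link simplicial'' to be read in the with-a-loop-at-every-vertex sense. Once modularity of $X$ is established, the induction closes and Theorem \ref{zaslavsky ss} follows.
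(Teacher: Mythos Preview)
Your argument is correct and is essentially the standard proof: exhibit a modular coatom by partitioning $\mathcal{A}(G)=\mathcal{A}_{0}\sqcup\mathcal{A}_{1}$ with $\mathcal{A}_{0}=\mathcal{A}(G\setminus v_{\ell})$ and $\mathcal{A}_{1}$ the hyperplanes involving $x_{\ell}$, and then verify the coatom condition that any $H,H'\in\mathcal{A}_{1}$ satisfy $H\cap H'\subseteq H''$ for some $H''\in\mathcal{A}_{0}$. Your case analysis is accurate, including the observation that the pairs involving $\{x_{\ell}=0\}$ or a double link are handled automatically by the ever-present coordinate hyperplanes, which is exactly why the paper's convention of including all coordinate hyperplanes (equivalently, a loop at every vertex) makes link simpliciality the right notion here.

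There is, however, nothing to compare against: the paper does not supply its own proof of this statement. Theorem~\ref{zaslavsky ss} is quoted from Zaslavsky's paper \cite{zaslavsky2001supersolvable-ejoc} as a black box and used without reproof. So your proposal is not a reconstruction of the paper's argument but rather an independent (and sound) verification of the cited result. One minor caution: in your write-up you invoke the criterion ``a coatom is modular iff every rank-two flat meets it''; when you flesh this out, make sure you state it as the arrangement-theoretic partition condition above (which you do use in practice), since the lattice formulation under reverse inclusion can be confusing to unwind and is where errors tend to creep in.
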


A \textbf{switching function} $ \nu $ of a signed graph $ G $ is a function $ \nu \colon V_{G} \to \{\pm 1 \} $. 
The signed graph switched by $ \nu $ is a signed graph $ G^{\nu} $ consisting of the following data: 
\begin{enumerate}[(1)]
\item $ V_{G^{\nu}} \coloneqq V_{G} $. 
\item $ E_{G^{\nu}}^{+} \coloneqq \Set{\{u,v\} \in E_{G}^{+} | \nu(u) = \nu(v)} \cup \Set{\{u,v\} \in E_{G}^{-} | \nu(u) \neq \nu(v)} $. 
\item $ E_{G^{\nu}}^{-} \coloneqq \Set{\{u,v\} \in E_{G}^{+} | \nu(u) \neq \nu(v)} \cup \Set{\{u,v\} \in E_{G}^{-} | \nu(u) = \nu(v)} $. 
\end{enumerate}
We say that $ G $ and $ G^{\nu} $ are \textbf{switching equivalent}. 
\begin{remark}
Note that freeness and the intersection lattice of signed-graphic arrangements is stable under switching since switching acts on arrangements as a coordinate transformation. 
Moreover, conditions (\ref{I}) (\ref{II}) (\ref{III}) from Theorem~\ref{main theorem} are also stable under switching. 
\end{remark}

Let $ e $ be an edge of a signed graph $ G $ and $ H = \{x_{i}\pm x_{j}=0\} $ the corresponding hyperplane (the sign depends on the sign of $ e $). 
Deleting $ x_{i} $ in the defining equations of hyperplanes in $ \mathcal{A}(G) $ with the relation $ x_{i}\pm x_{j}=0 $ we obtain the restriction $ \mathcal{A}(G)^{H} $. 
This is again a signed-graphic arrangement, that is, there exists a signed graph $ G_{i} $ on $ V_{G}\setminus \{i\} $ such that $ \mathcal{A}(G_{i}) = \mathcal{A}(G)^{H} $. 
If we delete $ x_{j} $, then we have another representation of $ \mathcal{A}(G)^{H} $ and there exists a signed graph $ G_{j} $ on $ V_{G}\setminus \{j\} $ such that $ \mathcal{A}(G_{j}) = \mathcal{A}(G)^{H} $. 
The graphs $ G_{i} $ and $ G_{j} $ are not isomorphic in general but one can show that they are switching equivalent. 
Let $ G/e $ denote $ G_{i} $ or $ G_{j} $ and call it the \textbf{contraction}. 

\begin{proposition}\label{free => II}
Let $ G $ be an unbalanced cycle of length three or more. 
Then $ \mathcal{A}(G) $ is non-free. 
\end{proposition}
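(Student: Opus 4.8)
The plan is to compute the characteristic polynomial $\chi(\mathcal{A}(G),t)$ in closed form and then invoke Terao's factorization theorem \cite{orlik1992arrangements}: if an arrangement is free, its characteristic polynomial factors as $\prod_{i=1}^{\ell}(t-d_i)$ with nonnegative integers $d_i$, so in particular all of its roots are real. Thus it suffices to produce a non-real root of $\chi(\mathcal{A}(G),t)$. As a first step I would replace $G$ by a convenient representative: freeness and the intersection lattice of a signed-graphic arrangement --- hence also its characteristic polynomial --- are stable under switching, and an unbalanced cycle of length $k\ge 3$ is switching equivalent to the cycle $1,2,\dots,k,1$ whose unique negative edge is $\{1,k\}$ and whose remaining $k-1$ edges are positive. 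So I may assume $G$ is of this form on the vertex set $\{1,\dots,k\}$. (If $G$ is allowed to have additional isolated vertices, then $\mathcal{A}(G)$ is a product of this cycle arrangement with a Boolean arrangement, and a product is free if and only if both factors are, which reduces that case to this one.)

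Next I would compute $\chi(\mathcal{A}(G),t)$ by the finite field method \cite{orlik1992arrangements}: for all but finitely many odd primes $q$, the value $\chi(\mathcal{A}(G),q)$ is the number of tuples $(x_1,\dots,x_k)\in\mathbb{F}_q^k$ satisfying $x_i\ne 0$ for all $i$, $x_i\ne x_{i+1}$ for $1\le i\le k-1$, and $x_1\ne -x_k$. The number of tuples with all coordinates nonzero and consecutive coordinates distinct is $(q-1)(q-2)^{k-1}$; writing $s_k$ for the number of these that in addition satisfy $x_k=-x_1$, one has $s_1=0$ (here $q$ odd is used, so that $-a\ne a$ for $a\ne 0$) and $s_{k+1}=(q-1)(q-2)^{k-1}-s_k$, whence $s_k=(q-2)^{k-1}+(-1)^k$ by induction. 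Subtracting gives $\chi(\mathcal{A}(G),q)=(q-2)^k-(-1)^k$ for infinitely many $q$, hence $\chi(\mathcal{A}(G),t)=(t-2)^k-(-1)^k$ as polynomials. (Alternatively one can proceed by deletion--restriction: deleting the negative edge produces a supersolvable arrangement with known characteristic polynomial, while restricting to that hyperplane produces an unbalanced cycle of length $k-1$, which yields a recursion in $k$.)

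It then remains to observe that $(t-2)^k-(-1)^k$ does not split into real linear factors when $k\ge 3$. Indeed, any real root $t$ satisfies $(t-2)^k=(-1)^k$, forcing $t-2\in\{1,-1\}$, so the polynomial has at most two real roots; being of degree $k\ge 3$ (and separable, since its only possible repeated root would be $t=2$, which is not a root), it therefore has a non-real root. Consequently $\chi(\mathcal{A}(G),t)$ has a non-real root, so $\mathcal{A}(G)$ is not free by Terao's factorization theorem.

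I expect the characteristic polynomial computation to be the only substantive step: one has to set up the recursion for $s_k$ so that it correctly records the constraint linking $x_1$ and $x_k$ through the single negative edge, and to keep in mind that the finite field identity is asserted only at odd primes --- which is harmless, since infinitely many values already determine the polynomial. The switching reduction and the final root count are routine.
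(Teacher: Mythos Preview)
Your argument is correct and complete: the switching reduction, the finite-field count yielding $\chi(\mathcal{A}(G),t)=(t-2)^{k}-(-1)^{k}$, and the observation that this polynomial has at most two real roots (hence a non-real root once $k\ge 3$) are all in order, and Terao's factorization theorem then finishes the job. For $k=3$ your formula indeed specializes to the paper's $t^{3}-6t^{2}+12t-7=(t-1)(t^{2}-5t+7)$.

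The paper, however, proceeds differently. It checks the base case $k=3$ exactly as you do (computing the characteristic polynomial and invoking Terao's factorization theorem), but for $k\ge 4$ it does \emph{not} compute $\chi$: instead it argues by contradiction using the addition--restriction machinery. Assuming $\mathcal{A}(G)$ were free, one fixes an edge $e$ with corresponding hyperplane $H$; since the deletion $G\setminus\{e\}$ has a link elimination ordering, $\mathcal{A}(G)\setminus\{H\}$ is supersolvable and in particular free, so the restriction theorem forces $\mathcal{A}(G)^{H}$ to be free. But $G/e$ is an unbalanced cycle of length $k-1$, contradicting the induction hypothesis. Thus the paper's proof is structural and avoids the full characteristic-polynomial computation, while yours is computational and yields the closed form $\chi(\mathcal{A}(G),t)=(t-2)^{k}-(-1)^{k}$ as a bonus. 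Your parenthetical suggestion to use deletion--restriction to set up a recursion for $\chi$ is in fact closer in spirit to the paper's inductive scheme than the finite-field route you actually take.
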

\begin{proof}
We proceed by induction on the length $ \ell $ of the cycle $ G $. 
First assume that $ \ell = 3 $. 
Then the characteristic polynomial is 
\begin{align*}
\chi(\mathcal{A}(G),t) = t^{3} - 6 t^{2} + 12 t - 7 = (t - 1) (t^{2} - 5 t + 7). 
\end{align*}
By Terao's factorization theorem \cite{terao1981generalized-im}, we have $ \mathcal{A}(G) $ is non-free. 

Now suppose that $ \ell \geq 4 $. 
Assume that $ \mathcal{A}(G) $ is free.
Fix an edge $ e $ and let $ H $ be the corresponding hyperplane. 
Since $ G \setminus \{e\} $ has a link elimination ordering, the deletion $ \mathcal{A}(G) \setminus \{H\} $ is supersolvable by Theorem \ref{zaslavsky ss}. 
Using the restriction theorem \cite[Corollary 4.47]{orlik1992arrangements}, we have that $ \mathcal{A}(G)^{H} $ is free. 
However, since $ G/e $ is an unbalanced cycle of length $ \ell -1 $, by the induction hypothesis, $ \mathcal{A}(G)^{H} $ is non-free, which is a contradiction. 
Thus $ \mathcal{A}(G) $ is non-free. 
\end{proof}

\begin{proposition}\label{free => III}
Let $ G $ be the signed graph in Figure \ref{Fig:obstruction}. 
Then $ \mathcal{A}(G) $ is non-free. 
\end{proposition}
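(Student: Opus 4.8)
The plan is to compute the characteristic polynomial $\chi(\mathcal{A}(G),t)$ outright and then invoke Terao's factorization theorem, exactly as in the base case of the proof of Proposition~\ref{free => II}. First I would read the $12$ hyperplanes of $\mathcal{A}(G)$ off Figure~\ref{Fig:obstruction}: the four coordinate hyperplanes $\{x_{i}=0\}$; the four hyperplanes $\{x_{1}\pm x_{2}=0\}$ and $\{x_{3}\pm x_{4}=0\}$ coming from the two double edges $\{1,2\}$ and $\{3,4\}$; and the four hyperplanes $\{x_{1}-x_{4}=0\}$, $\{x_{2}-x_{4}=0\}$, $\{x_{2}-x_{3}=0\}$, $\{x_{1}+x_{3}=0\}$ coming from the single links. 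In particular $|\mathcal{A}(G)|=12$ and $\mathcal{A}(G)$ is essential of rank $4$.

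Next I would compute $\chi(\mathcal{A}(G),t)$ by the finite-field method: for a sufficiently large prime $q$ (any odd prime suffices, the only coincidence to worry about being $2=0$), $\chi(\mathcal{A}(G),q)$ equals the number of tuples $(a_{1},a_{2},a_{3},a_{4})\in(\mathbb{F}_{q}^{\times})^{4}$ satisfying $a_{1}\neq\pm a_{2}$, $a_{3}\neq\pm a_{4}$, $a_{1}\neq a_{4}$, $a_{2}\neq a_{4}$, $a_{2}\neq a_{3}$ and $a_{1}\neq -a_{3}$. Choosing the coordinates in the order $a_{4},a_{2},a_{3},a_{1}$ and splitting according to whether $a_{2}=-a_{4}$ (and, in the generic branch, whether $a_{3}=-a_{2}$), so as to keep track of how many of the forbidden values collide, a direct count gives
\[
\chi(\mathcal{A}(G),t)=(t-1)(t-3)(t^{2}-8t+17).
\]
As a check, the coefficient of $t^{3}$ is $-12=-|\mathcal{A}(G)|$, and the constant term $51\neq 0$ is consistent with essentiality.

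Finally, the quadratic factor $t^{2}-8t+17$ has discriminant $64-68=-4<0$, hence is irreducible over $\mathbb{Q}$; in particular $\chi(\mathcal{A}(G),t)$ is not a product of linear polynomials with integer coefficients. If $\mathcal{A}(G)$ were free with exponents $(d_{1},\dots,d_{4})$, Terao's factorization theorem \cite{terao1981generalized-im} would force $\chi(\mathcal{A}(G),t)=\prod_{i=1}^{4}(t-d_{i})$ with $d_{i}\in\mathbb{Z}$, a contradiction. Therefore $\mathcal{A}(G)$ is non-free.

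No step here is conceptually hard; the only real work is the bookkeeping in the point count, where one must be careful about coincidences such as $a=-a$ and about overlaps among the excluded values. It is worth noting that a structural argument via deletion-restriction does not seem available: deleting or restricting at any single hyperplane of $\mathcal{A}(G)$ produces an arrangement whose characteristic polynomial already factors over $\mathbb{Z}$ (reflecting the fact that the graph of Figure~\ref{Fig:obstruction} is a minimal obstruction), so the direct computation of $\chi(\mathcal{A}(G),t)$ is the natural route.
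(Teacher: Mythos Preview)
Your proposal is correct and follows essentially the same approach as the paper: compute $\chi(\mathcal{A}(G),t)=(t-1)(t-3)(t^{2}-8t+17)$ and observe that the irrational roots contradict Terao's factorization theorem. The paper simply asserts this factorization without explaining the computation, whereas you supply the finite-field count; your identification of the twelve hyperplanes and the resulting polynomial agree exactly with the paper's.
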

\begin{proof}
The characteristic polynomial $ \mathcal{A}(G) $ is 
\begin{align*}
\chi(\mathcal{A}(G), t)
= t^{4} - 12t^{3} + 52t^{2} - 92t + 51
= (t - 1)(t - 3)(t^{2} - 8t + 17). 
\end{align*}
By Terao's factorization theorem, we have $ \mathcal{A}(G) $ is non-free. 
\end{proof}

From Proposition \ref{free => I}, \ref{free => locally free}, \ref{free => II}, and \ref{free => III}, we obtain the following lemma, that proves the implication $ (\ref{main theorem 3}) \Rightarrow (\ref{main theorem 1}) $ of Theorem \ref{main theorem}.. 
\begin{lemma}\label{3=>1}
Let $ G $ be a signed graph. 
If $ \mathcal{A}(G) $ is free, then conditions (\ref{I}), (\ref{II}), and (\ref{III}) hold. 
\end{lemma}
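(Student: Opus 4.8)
The plan is to assemble Lemma~\ref{3=>1} directly from the four propositions that precede it, with Proposition~\ref{free => locally free} as the workhorse that lets us pass from $\mathcal{A}(G)$ to arrangements of induced subgraphs. Suppose $\mathcal{A}(G)$ is free. The argument splits into three independent parts, one for each of conditions (\ref{I}), (\ref{II}), (\ref{III}).

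For condition (\ref{I}): this is immediate from Proposition~\ref{free => I}, which states verbatim that freeness of $\mathcal{A}(G)$ implies $G$ is balanced chordal. No further work is needed here.

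For condition (\ref{II}): I would argue by contradiction. Suppose $G$ has an induced subgraph $F = G[W]$ that is isomorphic to an unbalanced cycle of length three or more. By Proposition~\ref{free => locally free}, freeness of $\mathcal{A}(G)$ forces $\mathcal{A}(F)$ to be free. But Proposition~\ref{free => II} says that the signed-graphic arrangement of an unbalanced cycle of length at least three is non-free — a contradiction. Hence no such induced subgraph exists. The argument for condition (\ref{III}) is structurally identical: if $G$ had an induced subgraph switching equivalent to the graph $F_0$ in Figure~\ref{Fig:obstruction}, then by Proposition~\ref{free => locally free} the arrangement of that induced subgraph would be free, and since freeness is invariant under switching (as noted in the remark following the definition of switching), $\mathcal{A}(F_0)$ itself would be free, contradicting Proposition~\ref{free => III}.

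I do not anticipate a genuine obstacle here — the lemma is essentially a bookkeeping corollary of results already in place. The only point requiring a moment's care is the switching-invariance step in the treatment of (\ref{III}): one must invoke that freeness of a signed-graphic arrangement is preserved under switching equivalence (switching acts as a coordinate change on $\mathbb{K}^\ell$), so that "switching equivalent to the obstruction" is as good as "equal to the obstruction" for the purpose of deriving non-freeness. With that observation recorded, the three parts combine to give conditions (\ref{I}), (\ref{II}), and (\ref{III}), which is exactly the statement of Lemma~\ref{3=>1}, and hence the implication $(\ref{main theorem 3}) \Rightarrow (\ref{main theorem 1})$ of Theorem~\ref{main theorem}.
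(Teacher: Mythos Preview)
Your argument is correct and matches the paper's approach exactly: the paper simply states that Lemma~\ref{3=>1} follows from Propositions~\ref{free => I}, \ref{free => locally free}, \ref{free => II}, and \ref{free => III}, and your write-up supplies precisely the routine details (contradiction via induced subgraphs, plus switching-invariance of freeness for condition~(\ref{III})) that the paper leaves implicit.
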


\section{The graph $ \tilde{G} $}\label{sec:tildeG}
In this section, we will describe how to construct a new signed graph $ \tilde{G} $ from a given signed graph $ G $. Furthermore, we will investigate the properties of $ \tilde{G} $. This new graph will play an important role in this article.
\begin{definition}
Let $ G=(G^{+},G^{-}) $ be a signed graph.
Define $ \tilde{G} \coloneqq (\tilde{G}^{+}, \tilde{G}^{-}) $,  where $ \tilde{G}^{+} \coloneqq G^{+} \cup G^{-} $ and $ \tilde{G}^{-} \coloneqq G^{+} \cap G^{-} $. 
\end{definition}
Note that $ \tilde{G}^{+} $ is the underlying simple graph of $ G $ and $ \tilde{G}^{-} $ is the simple graph consisting of the double edges of $ G $. 
Therefore $ \tilde{G} $ satisfies $ \tilde{G}^{+} \supseteq \tilde{G}^{-} $. 
Furthermore, we obtain the graph $ \tilde{G} $ by replacing the single edges of $ G $ with single positive edges. 
Hence, for any switching function $ \nu $, $ G $ and $ G^{\nu} $ yield the same graph $ \tilde{G} $. 
However, $ G $ and $ \tilde{G} $ are not switching equivalent in general (see Figure \ref{Fig:example tildeG}). 
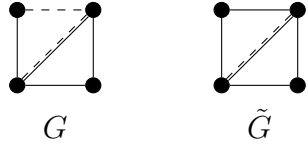
\begin{figure}[t]
\centering
\begin{tikzpicture}
\draw (0,1) node[v](1){};
\draw (0,0) node[v](2){};
\draw (1,0) node[v](3){};
\draw (1,1) node[v](4){};
\draw (1)--(2)--(3)--(4);
\draw[dashed] (1)--(4);
\draw[decoration={dashsoliddouble}, decorate] (2)--(4);
\draw (0.5,-0.5) node(5){$ G $\vphantom{$ \tilde{G} $}};
\end{tikzpicture}
\hspace{12mm}
\begin{tikzpicture}
\draw (0,1) node[v](1){};
\draw (0,0) node[v](2){};
\draw (1,0) node[v](3){};
\draw (1,1) node[v](4){};
\draw (1)--(2)--(3)--(4)--(1);
\draw[decoration={dashsoliddouble}, decorate] (2)--(4);
\draw (0.5,-0.5) node(5){$ \tilde{G} $};
\end{tikzpicture}
\caption{Example of $ G $ and $ \tilde{G} $}\label{Fig:example tildeG}
\end{figure}

\begin{proposition}\label{G^us chordal}
If a signed graph $ G $ satisfies (\ref{I}) and (\ref{II}), then $ \tilde{G}^{+} $ is chordal. 
\end{proposition}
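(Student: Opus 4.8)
The plan is to argue by contradiction: suppose $ \tilde{G}^{+} = G^{+} \cup G^{-} $ is not chordal, so it contains an induced cycle $ C $ of length $ k \geq 4 $ in the underlying simple graph. The goal is to produce, from $ C $, either a balanced cycle of length at least four with no balanced chord (violating condition~(\ref{I})) or an unbalanced induced cycle of length three or more (violating condition~(\ref{II})). The key observation is that although $ C $ has no chord in $ \tilde{G}^{+} $, the corresponding vertices in $ G $ carry a sign on each edge, and since $ C $ is chordless in the underlying graph, $ G[V_C] $ is itself a (single- or double-edged) cycle in $ G $ of the same length $ k $. So the question reduces to: what can a chordless cycle in $ G $ look like, given (\ref{I}) and (\ref{II})?

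First I would dispose of the double-edge issue. If an edge $ \{u,v\} $ of $ C $ is double in $ G $, then $ u,v$ together with that double edge already form a $2$-cycle; more usefully, a double edge can be "split'' — in $ G $ one may regard the pair as both a positive and a negative connection — and one checks that a double edge lets you realize both parities across it. The clean way: if $ C $ has a double edge, then replacing that double edge by its negative copy changes the parity of $ C $, so one of the two resulting closed walks is unbalanced; with a double edge present one can always find an unbalanced cycle on $ V_C $ (or a shorter one), contradicting~(\ref{II}) after passing to an induced unbalanced cycle via the standard "shortest unbalanced cycle is chordless or has an unbalanced chord'' argument. So I may assume every edge of $ C $ is single in $ G $, i.e.\ $ C $ is an honest chordless cycle of the signed graph $ G $ with a well-defined parity.

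Now split on the parity of $ C $. If $ C $ is unbalanced, then $ G[V_C] $ is an induced unbalanced cycle of length $ k \geq 4 \geq 3 $, directly contradicting~(\ref{II}). If $ C $ is balanced, then it is a balanced cycle of $ G $ of length $ k \geq 4 $; by~(\ref{I}) it must have a balanced chord. But a balanced chord is in particular a chord, hence an edge of $ \tilde{G}^{+} $ joining two non-consecutive vertices of $ C $ — contradicting the assumption that $ C $ is chordless in $ \tilde{G}^{+} $. Either way we reach a contradiction, so $ \tilde{G}^{+} $ is chordal.

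The main obstacle I expect is the double-edge bookkeeping in the first reduction: one needs to be careful that "turning a double edge into a negative edge'' genuinely yields a cycle living inside $ G $ (it does, since both parities are available on a double edge) and that the resulting unbalanced cycle can be taken induced. The cleanest route is probably to invoke the standard fact that a signed graph with an unbalanced cycle has an unbalanced cycle that is chordless, or has a chord splitting it into a balanced and an unbalanced cycle — iterating on the shorter unbalanced piece — so that condition~(\ref{II}) on induced unbalanced cycles suffices. Everything else is the short parity case analysis above, and no computation is needed.
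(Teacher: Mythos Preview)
Your overall strategy---contradiction, lift the chordless cycle $C$ of $\tilde G^{+}$ to the induced signed subgraph $G[V_C]$, then split on parity---is the same as the paper's. The balanced case and the single-edge unbalanced case are handled correctly. The gap is in your double-edge reduction.

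You argue that if some edge of $C$ is double, you can choose signs so that the resulting cycle is \emph{unbalanced}, and then invoke~(\ref{II}) after passing to an induced unbalanced cycle. This does not work: since $C$ is already chordless in $\tilde G^{+}$, the induced subgraph $G[V_C]$ has no chords at all, so your ``shortest unbalanced cycle'' iteration terminates at $C$ itself. But $G[V_C]$ still carries that double edge, hence it is \emph{not} isomorphic as a signed graph to an unbalanced cycle (which by definition has exactly $k$ single edges), and condition~(\ref{II}) says nothing about it. A double edge is not a chord, so it does not let you shorten the cycle either.

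The fix is to route the double-edge case the other way, exactly as the paper does: if $C$ has a double edge, use it to choose signs making the cycle \emph{balanced}, and then apply~(\ref{I}) to obtain a balanced chord---which is in particular an edge of $\tilde G^{+}$ between non-consecutive vertices of $C$, the desired contradiction. Equivalently, in the paper's ordering: start from any signed cycle $C'$ on $V_C$; if $C'$ is unbalanced, condition~(\ref{II}) forces a chord or a double edge, and in the latter case flip that edge to make $C'$ balanced; now~(\ref{I}) gives a chord. Either way the key step in the double-edge situation is~(\ref{I}), not~(\ref{II}).
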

\begin{proof}
Let $ C $ be a cycle of $ \tilde{G}^{+} $ of length at least four. 
Let $ C^{\prime} $ be a cycle of $ G $ corresponding to $ C $. 
It suffices to show that $ C^{\prime} $ has a chord since every chord of $ C^{\prime} $ leads a chord of $ C $. 
If $ C^{\prime} $ is unbalanced, then $ C^{\prime} $ has a chord or admits a double edge by (\ref{II}). 
When the latter holds, we obtain a balanced cycle by changing the sign of the double edge $ C^{\prime} $. 
Hence we may assume that $ C^{\prime} $ is balanced. 
By (\ref{I}), we have that $ C^{\prime} $ has a chord. 
\end{proof}

\begin{proposition}\label{simp tilde}
Let $ G $ be a signed graph. 
Then a link-simplicial vertex in $ G $ is link simplicial in $ \tilde{G} $. 
Moreover, when $ G $ has no induced subgraph isomorphic to an unbalanced triangle, a link-simplicial vertex in $ \tilde{G} $ is link simplicial in $ G $.  
\end{proposition}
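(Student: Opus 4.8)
The plan is to work entirely through the edge-type dictionary between $ G $ and $ \tilde{G} $: a single edge of $ G $ (of either sign) becomes a single positive edge of $ \tilde{G} $, a double edge of $ G $ remains a double edge of $ \tilde{G} $, and a non-edge remains a non-edge. The structural fact I would record at the outset is that $ \tilde{G} $ has \emph{no} single negative edge, so every negative edge of $ \tilde{G} $ is one half of a double edge, and the underlying simple graphs of $ G $ and $ \tilde{G} $ coincide. I use that a triangle with edge signs $ s_1, s_2, s_3 \in \{\pm 1\} $ is balanced exactly when $ s_1 s_2 s_3 = +1 $, so that ``$ vu_1 $ of sign $ s_1 $, $ vu_2 $ of sign $ s_2 $, $ u_1u_2 $ present'' forms a balanced triangle precisely when $ u_1u_2 $ carries an edge of sign $ s_1 s_2 $. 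The degenerate case of two edges incident to $ v $ that go to a common neighbour concerns a double edge of $ G $, which is also a double edge of $ \tilde{G} $, so it is treated identically in both graphs and can be ignored; accordingly I only consider pairs of edges going to two distinct neighbours of $ v $.

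The one lemma I would isolate is: if $ v $ is link simplicial in $ G $ and $ u_1, u_2 $ are distinct neighbours of $ v $, then (a) $ u_1u_2 $ is an edge of $ G $, and (b) if moreover $ vu_1 $ or $ vu_2 $ is a double edge of $ G $, then $ u_1u_2 $ is a double edge of $ G $. Part (a) is immediate from link simpliciality of $ v $ applied to any edges $ vu_1, vu_2 $. For (b), suppose $ vu_1 $ is double and $ vu_2 $ has an edge of sign $ s $; applying link simpliciality to the positive half of $ vu_1 $ paired with $ (vu_2, s) $, and then to the negative half of $ vu_1 $ paired with $ (vu_2, s) $, produces edges $ u_1u_2 $ of signs $ s $ and $ -s $, so $ u_1u_2 $ is double. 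The first assertion of the proposition is then quick: given edges $ vu_1, vu_2 $ of $ \tilde{G} $ with $ u_1 \neq u_2 $, if their signs multiply to $ +1 $ I only need \emph{some} edge $ u_1u_2 $ in $ \tilde{G} $, which exists by (a); if their signs multiply to $ -1 $, then one of them is negative in $ \tilde{G} $, hence the corresponding edge of $ G $ is double, so by (b) $ u_1u_2 $ is double in $ G $, hence double in $ \tilde{G} $, giving the needed negative edge $ u_1u_2 $.

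For the converse, assume $ v $ is link simplicial in $ \tilde{G} $ and $ G $ has no induced unbalanced triangle, and let $ vu_1 $ of sign $ s_1 $ and $ vu_2 $ of sign $ s_2 $ be edges of $ G $ with $ u_1 \neq u_2 $; I want an edge $ u_1u_2 $ of $ G $ of sign $ s_1 s_2 $. Both $ vu_1 $ and $ vu_2 $ are positive edges of $ \tilde{G} $, so link simpliciality of $ v $ in $ \tilde{G} $ gives a positive edge $ u_1u_2 $ of $ \tilde{G} $, whence $ u_1u_2 $ is an edge of $ G $. If $ u_1u_2 $ is double in $ G $, it carries an edge of sign $ s_1 s_2 $ and we are done. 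If $ u_1u_2 $ is single, I first observe that $ vu_1 $ and $ vu_2 $ must also be single: if, say, $ vu_1 $ were double, then pairing its negative half in $ \tilde{G} $ with the positive edge $ vu_2 $ and using link simpliciality of $ v $ in $ \tilde{G} $ would produce a negative edge $ u_1u_2 $ in $ \tilde{G} $, i.e.\ $ u_1u_2 $ double in $ G $ --- a contradiction. Hence $ G[\{v,u_1,u_2\}] $ is a genuine triangle of single edges, and since it cannot be an induced unbalanced triangle its edge signs multiply to $ +1 $, forcing the sign of $ u_1u_2 $ to be $ s_1 s_2 $.

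I expect the only delicate point to be the bookkeeping around double edges, and in particular pinning down \emph{where} the no-induced-unbalanced-triangle hypothesis is actually used in the second assertion: it is needed only in the final subcase, when all three of $ vu_1, vu_2, u_1u_2 $ are single, because any double edge among $ vu_1, vu_2 $ already forces $ u_1u_2 $ to be double through its negative half in $ \tilde{G} $, and that subcase goes through with no hypothesis at all.
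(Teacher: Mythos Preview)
Your proof is correct and follows essentially the same approach as the paper's: both arguments hinge on the dictionary between edges of $G$ and $\tilde{G}$, the observation that a double edge at $v$ forces a double edge between the two neighbours (your lemma (b), which the paper uses implicitly via ``in a similar way to the above''), and the use of the no-induced-unbalanced-triangle hypothesis exactly in the all-single-edges subcase. The only difference is organizational---you split the converse on whether $u_1u_2$ is double, whereas the paper splits on whether $vu_1$ or $vu_2$ is double---but the logic is the same.
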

\begin{proof}
First we assume that $ v $ is link simplicial in $ G $ and prove that $ v $ is link simplicial in $\tilde{G} $. 
Take edges $ e,e^{\prime} $ of $ \tilde{G} $ incident to $ v $ such that the other endvertices $ u,u^{\prime} $ are distinct. 

If $ e $ or $ e^{\prime} $ is double, then there exists a double edge between $ u $ and $ u^{\prime} $ since $ v $ is a link-simplicial vertex of $ G $. 
Therefore we may choose an edge forming a balanced triangle with $ e, e^{\prime} $. 

Now we assume that both $ e $ and $ e^{\prime} $ are positive single edges. 
Then we have a positive edge in $ \tilde{G} $ since $ v $ is link simplicial in $ G $ and this edge forms a balanced triangle with $ e, e^{\prime} $. 

Next suppose that $ G $ has no induced subgraph isomorphic to an unbalanced triangle and take a link-simplicial vertex $ v $ in $ \tilde{G} $. 
We prove that $ v $ is link simplicial in $ G $. 
Let $ e, e^{\prime} $ be edges of $ G $ incident to $ v $ such that the other endvertices $ u, u^{\prime} $ are different. 

If $ e $ or $ e^{\prime} $ is double, we may prove the assertion in a similar way to the above. 
Suppose that both $ e $ and $ e^{\prime} $ are single edges. 
Since $ v $ is link simplicial in $ \tilde{G} $, there exists an edge $ e^{\prime\prime} $ of $ G $ between $ u $ and $ u^{\prime} $. 
If $ e,e^{\prime},e^{\prime\prime} $ form an unbalanced triangle, then $ e^{\prime\prime} $ is double since $ G $ has no induced subgraph isomorphic to an unbalanced triangle. 
Thus we obtain an edge forming a balanced triangle with $ e $ and $ e^{\prime} $. 
Therefore $ v $ is link simplicial in $ G $. 
\end{proof}

\section{The case $ \tilde{G}^{+} $ is complete}\label{sec:tildeG is complete}
In this section, we consider the case that $ \tilde{G}^{+} $ is complete. 

\begin{proposition}\label{Gus comp I III => pm threshold}
Suppose that $ \tilde{G}^{+} $ is complete. 
If $ G $ satisfies (\ref{I}) and (\ref{III}), then $ \tilde{G}^{-} $ is threshold. 
\end{proposition}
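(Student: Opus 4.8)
The plan is to characterize threshold graphs by the forbidden induced subgraphs $2K_2$, $C_4$, $P_4$ (Theorem~\ref{forbidden induced subgraphs threshold}) and show that each of these, as an induced subgraph of $\tilde{G}^{-}$, would violate either (\ref{I}) or (\ref{III}) once we take into account that $\tilde{G}^{+}$ is complete (so the "missing" positive edges are always present). Throughout I will use that $\tilde{G}^{+} \supseteq \tilde{G}^{-}$, so a negative edge of $\tilde{G}$ is precisely a double edge of $G$, and a vertex pair carrying only a positive edge of $\tilde{G}$ is a single positive edge of $G$ after switching; in particular every pair of vertices is joined in $\tilde{G}$ either by a single positive edge or by a double edge. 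I would first fix notation: suppose for contradiction that $\tilde{G}^{-}$ contains an induced copy of one of the three obstruction graphs on a vertex set $W$ of size four, and analyze $\tilde{G}[W]$ (equivalently $G[W]$ up to switching), recalling that on $W$ the graph $\tilde{G}^{+}$ is complete $K_4$.

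First I would dispose of $C_4$: if $\tilde{G}^{-}[W]$ is a 4-cycle, say with negative edges $\{a,b\},\{b,c\},\{c,d\},\{d,a\}$ and non-negative (hence single positive) diagonals $\{a,c\},\{b,d\}$, then in $G[W]$ the 4-cycle $abcd$ with all four edges double is balanced, and I claim neither diagonal is a balanced chord. Indeed the diagonal $ac$ is a single positive edge, so it splits the balanced 4-cycle into two triangles each having exactly two negative edges plus one positive edge — wait, that is balanced; so I need to look more carefully. The correct observation is that the relevant balanced $4$-cycle to violate (\ref{I}) is one using the diagonal structure: consider the cycle $a,b,c,d$ where we traverse using, say, a mixture; more robustly, I would directly exhibit a balanced cycle of length four in $G[W]$ whose only chords are the two positive diagonals and check each diagonal separates it into an unbalanced pair. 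For $P_4$ with negative path $abcd$ (so $\{a,c\},\{b,d\},\{a,d\}$ single positive): the cycle $a\,b\,d$ via negatives $ab$, and... here I would instead match $G[P_4$-pattern$]$ against the obstruction of Figure~\ref{Fig:obstruction}, which is exactly a $P_4$ of single positive edges with two double edges at the ends plus one positive chord — and show that after switching, the $P_4$-pattern in $\tilde{G}^{-}$ together with completeness of $\tilde{G}^{+}$ is switching equivalent to Figure~\ref{Fig:obstruction}, contradicting (\ref{III}). For $2K_2$ with negative edges $\{a,b\},\{c,d\}$ and the four cross pairs single positive, the 4-cycle $a,c,b,d$ (positive, positive, positive, positive)? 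No — I would take the 4-cycle using $ab$ (negative), $bc$ (positive), $cd$ (negative), $da$ (positive): two negatives, balanced, length four; its only chords are $\{a,c\}$ and $\{b,d\}$, both positive single edges, and each splits it into a triangle with one negative edge — unbalanced — so there is no balanced chord, contradicting (\ref{I}).

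The main obstacle, and the step I would spend the most care on, is the $P_4$ case, because unlike $2K_2$ and $C_4$ it is not ruled out by balanced chordality alone — it is genuinely the configuration that condition (\ref{III}) exists to exclude, so I must nail down the switching equivalence with Figure~\ref{Fig:obstruction} precisely. Concretely: label the $P_4$ in $\tilde{G}^{-}$ as $x_1 x_2 x_3 x_4$ with negative edges $\{x_1,x_2\},\{x_2,x_3\},\{x_3,x_4\}$; since $\tilde{G}^{+}=K_4$ on these vertices, the pairs $\{x_1,x_3\},\{x_1,x_4\},\{x_2,x_4\}$ are single positive. I would choose the switching function $\nu$ with $\nu(x_2)=\nu(x_3)=-1$, $\nu(x_1)=\nu(x_4)=+1$ (or whatever sign pattern works) and verify edge-by-edge that $G^{\nu}[W]$ has exactly the edge set of Figure~\ref{Fig:obstruction}: two double edges $\{x_1,x_2\}$ and $\{x_3,x_4\}$, the three single positive edges $x_1 x_4$, $x_4 x_2$, $x_2 x_3$ forming the solid $P_4$, and the dashed negative chord $\{x_1,x_3\}$ — matching the figure's description "dashed = negative, the two end edges doubled." Since (\ref{III}) forbids any induced subgraph switching-equivalent to Figure~\ref{Fig:obstruction}, this is the desired contradiction. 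Having excluded all three forbidden induced subgraphs of $\tilde{G}^{-}$, Theorem~\ref{forbidden induced subgraphs threshold} gives that $\tilde{G}^{-}$ is threshold, completing the proof. I expect the write-up will require a short lemma or remark pinning down the "single edge = single positive edge up to switching" bookkeeping so that the case analysis on $\tilde{G}[W]$ translates cleanly to statements about $G[W]$.
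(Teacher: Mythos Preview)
Your overall strategy --- use the $2K_2,\ C_4,\ P_4$ characterization and analyze $G[W]$ on four vertices --- matches the paper. But you have the roles of the three cases reversed, and this creates two genuine errors.

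\textbf{The $P_4$ case does not match Figure~\ref{Fig:obstruction}.} Switching preserves the set of double edges. In your $P_4$ configuration there are three double edges $\{x_1,x_2\},\{x_2,x_3\},\{x_3,x_4\}$, whereas the graph in Figure~\ref{Fig:obstruction} has exactly two double edges. So no switching can carry one to the other, and your proposed use of (\ref{III}) here cannot work. In fact $P_4$ (like $C_4$) is eliminated by (\ref{I}) alone: the three single edges $\{x_1,x_3\},\{x_2,x_4\},\{x_1,x_4\}$ form a path, hence can all be switched positive simultaneously; then the balanced cycle $[x_1x_2]x_3[x_3x_4]x_1$ has only the positive single chords $\{x_1,x_3\},\{x_2,x_4\}$, neither of which is balanced.

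\textbf{The $2K_2$ case cannot be handled by (\ref{I}) alone.} Your argument assumes the four single cross-edges $\{a,c\},\{a,d\},\{b,c\},\{b,d\}$ can all be switched positive. These edges form a $4$-cycle $acbd$, and the product of their signs around that cycle is a switching invariant; when it is $-1$ your normalization is impossible. Worse, the graph of Figure~\ref{Fig:obstruction} itself has $\tilde G^{-}\cong 2K_2$ and one checks directly that it satisfies (\ref{I}); so (\ref{I}) alone genuinely does not exclude $2K_2$. The paper's argument instead switches only $\{a,d\}$ and $\{b,c\}$ positive (always possible), then applies (\ref{I}) to the two balanced cycles $abcda$ and $[ab][cd]a$ to force one of $\{a,c\},\{b,d\}$ to be positive and the other negative --- which is exactly the Figure~\ref{Fig:obstruction} configuration, contradicting (\ref{III}).

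In short: swap your treatments of $2K_2$ and $P_4$. Condition~(\ref{III}) is tailored to the $2K_2$ pattern, not the $P_4$ pattern.
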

\begin{proof}
Assume that $ \tilde{G}^{-} $ is not threshold. 
Then $ \tilde{G}^{-} $ has an induced subgraph $ H $ isomorphic to $ 2K_{2}, C_{4}, $ or $ P_{4} $ by Theorem \ref{forbidden induced subgraphs threshold}. 
Let $ V_{H} = \{a,b,c,d\} $ be the vertex set of $H$. 

First suppose that $ H = 2K_{2} $. 
We may assume that $ \{a,b\} $ and $ \{c,d\} $ are the edges of $ H $. 
In other words, $ \{a,b\} $ and $ \{c,d\} $ are the double edges of $ G[\{a,b,c,d\}] $. 
By switching, we may assume that $ \{a,d\} $ and $ \{b,c\} $ are single positive edges. 
Considering the balanced cycles $ abcda $ and $ [ab][cd]a $, by condition (\ref{I}), we may assume that $ \{a,c\} $ is a single positive edge and $ \{b,d\} $ is a single negative edge, which contradicts condition (\ref{III}). 

Next suppose that $ H = C_{4} $ and that $ \{a,b\} , \{b,c\}, \{c,d\}, \{d,a\} $ are the double edges of $ G[\{a,b,c,d\}] $. 
By switching, we may assume that the single edges $ \{a,c\} $ and $ \{b,d\} $ are positive. 
Consider the balanced cycle $ [ab][cd]a $, we have that $ \{a,c\} $ or $ \{b,d\} $ is negative, which is a contradiction. 
The case $ H = P_{4} $ is similar. 
Thus the assertion holds. 
\end{proof}

We are now ready to prove Theorem \ref{main theorem} in the case that $ \tilde{G}^{+} $ is complete.

\begin{proposition}\label{G^us complete}
Suppose that $ \tilde{G}^{+} $ is complete. 
Then the following conditions are equivalent: 
\begin{enumerate}[(1)]
\item\label{G^us complete 1} $ G $ has a link elimination ordering. 
\item\label{G^us complete 2} $ \mathcal{A}(G) $ is supersolvable. 
\item\label{G^us complete 3} $ \mathcal{A}(G) $ is free. 
\item\label{G^us complete 4} $ G $ satisfies (\ref{I}), (\ref{II}), and (\ref{III}). 
\item\label{G^us complete 5} $ G $ satisfies (\ref{II}) and $ \tilde{G}^{-} $ is threshold. 
\end{enumerate}
\end{proposition}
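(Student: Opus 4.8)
The plan is to prove the cycle of implications
\[
(\ref{G^us complete 1}) \Rightarrow (\ref{G^us complete 2}) \Rightarrow (\ref{G^us complete 3}) \Rightarrow (\ref{G^us complete 4}) \Rightarrow (\ref{G^us complete 5}) \Rightarrow (\ref{G^us complete 1}),
\]
so that all five conditions become equivalent. The first implication is immediate from Zaslavsky's Theorem~\ref{zaslavsky ss}, and the second is the classical fact that supersolvable arrangements are free. The third implication, $(\ref{G^us complete 3}) \Rightarrow (\ref{G^us complete 4})$, is exactly Lemma~\ref{3=>1}: if $ \mathcal{A}(G) $ is free, then conditions (\ref{I}), (\ref{II}), (\ref{III}) all hold. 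The fourth implication, $(\ref{G^us complete 4}) \Rightarrow (\ref{G^us complete 5})$, follows directly from Proposition~\ref{Gus comp I III => pm threshold}: assuming $ \tilde{G}^{+} $ is complete, conditions (\ref{I}) and (\ref{III}) force $ \tilde{G}^{-} $ to be threshold, and (\ref{II}) is retained verbatim.

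So the real content is the last implication $(\ref{G^us complete 5}) \Rightarrow (\ref{G^us complete 1})$: assuming $ \tilde{G}^{+} $ is complete, that $ G $ satisfies (\ref{II}) (no induced unbalanced cycle of length $\geq 3$), and that $ \tilde{G}^{-} $ (the graph of double edges) is threshold, I must produce a link elimination ordering of $ G $. The natural approach is to use the threshold structure of $ \tilde{G}^{-} $: take a degree ordering $ (v_1, \dots, v_\ell) $ of $ \tilde{G}^{-} $, so that in every induced subgraph either $ v_1 $ is dominating in $ \tilde{G}^{-} $ or the last vertex is isolated in $ \tilde{G}^{-} $. I would argue that one of the extreme vertices is link simplicial in $ G $, peel it off, and induct — noting that deleting a vertex preserves all three hypotheses (completeness of $ \tilde{G}^{+} $ is inherited by induced subgraphs, (\ref{II}) is hereditary, and an induced subgraph of a threshold graph is threshold).

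The crux is the local claim: \emph{if $ \tilde{G}^{+} $ is complete, $ G $ has no induced unbalanced triangle, and $ v $ is either isolated in $ \tilde{G}^{-} $ or dominating in $ \tilde{G}^{-} $, then $ v $ is link simplicial in $ G $}. For this I would take two edges $ e = \{v,u\} $, $ e' = \{v,u'\} $ of $ G $ with $ u \neq u' $; since $ \tilde{G}^{+} $ is complete there is always an edge $ e'' $ of $ G $ between $ u $ and $ u' $, and I must show $ e, e', e'' $ can be chosen to form a balanced triangle. If $ e'' $ can be taken double there is nothing to do (a double edge supplies a sign of either parity). If $ v $ is isolated in $ \tilde{G}^{-} $, then $ e $ and $ e' $ are both single; by (\ref{II}) the triangle $ v u u' $ cannot be an induced unbalanced triangle, so either $ e'' $ is double or the triangle is already balanced — either way $ v $ is fine. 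If instead $ v $ is dominating in $ \tilde{G}^{-} $, then every edge at $ v $, in particular $ e $ and $ e' $, is double; then after switching I can arrange both to be, say, negative single representatives... more carefully, since $ e,e' $ are double I may use whichever sign I like for each, so I pick signs making the triangle $ vuu' $ balanced regardless of the sign of $ e'' $. I expect this dominating-vertex case to be the one demanding the most care, precisely because the interaction between the double edges at $ v $ and the possibly-single edge $ e'' $ must be tracked through switching; condition (\ref{II}) and the freedom afforded by double edges should nonetheless close the argument. Finally, assembling the induction: at each stage the degree ordering of the current $ \tilde{G}^{-} $ exhibits an extreme vertex which the local claim shows to be link simplicial in the current $ G $, so removing it and repeating yields the desired link elimination ordering, completing the cycle.
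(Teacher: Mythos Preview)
Your cycle of implications and the first four steps match the paper's proof exactly. The gap is in the ``local claim'' for the dominating case of $(\ref{G^us complete 5}) \Rightarrow (\ref{G^us complete 1})$. You assert that if $v$ is dominating in $\tilde{G}^{-}$ then $v$ is link simplicial in $G$, because ``since $e,e'$ are double I may use whichever sign I like for each.'' But link simpliciality is a \emph{for all} condition over pairs of signed edges at $v$: the signs of $e$ and $e'$ are given, not chosen. Concretely, take $V_G=\{v,u,u'\}$ with $\{v,u\}$ and $\{v,u'\}$ double and $\{u,u'\}$ a single positive edge. Then $\tilde{G}^{+}=K_3$, condition (\ref{II}) holds (the induced subgraph on three vertices has five edges, so is not a $3$-cycle), and $\tilde{G}^{-}\cong P_3$ is threshold with $v$ dominating. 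Yet for $e=\{v,u\}^{+}$ and $e'=\{v,u'\}^{-}$ there is no negative edge $\{u,u'\}$ to complete a balanced triangle, so $v$ is \emph{not} link simplicial.

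The paper avoids this by never claiming the dominating vertex is link simplicial. It always shows that the \emph{last} vertex $v_\ell$ in a degree ordering is link simplicial: if $v_\ell$ is isolated in $\tilde{G}^{-}$, your argument applies; if not, then $v_1$ is dominating in $\tilde{G}^{-}$, one removes $v_1$, applies the induction hypothesis to $G\setminus v_1$ (where $(v_2,\dots,v_\ell)$ is still a degree ordering since deleting a vertex joined by double edges to all others lowers every degree by $2$) to conclude $v_\ell$ is link simplicial in $G\setminus v_1$, and then observes that adding back $v_1$, which is joined to every vertex by a double edge, cannot destroy link simpliciality of $v_\ell$ (any new pair at $v_\ell$ involves the double edge $\{v_\ell,v_1\}$ and is completed by the double edge $\{v_1,u'\}$). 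Your inductive scaffold is sound; you just need to replace the false ``dominating $\Rightarrow$ link simplicial'' step with this remove-$v_1$-and-reinsert argument.
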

\begin{proof}
By Theorem \ref{zaslavsky ss}, we have $ (\ref{G^us complete 1}) \Rightarrow (\ref{G^us complete 2}) $. 
The implication $ (\ref{G^us complete 2}) \Rightarrow (\ref{G^us complete 3}) $ is well known, as mentioned before. 
From Lemma \ref{3=>1}, the implication $ (\ref{G^us complete 3}) \Rightarrow (\ref{G^us complete 4}) $ holds. 
By Proposition \ref{Gus comp I III => pm threshold}, we have $ (\ref{G^us complete 4}) \Rightarrow (\ref{G^us complete 5}) $. 

We now show the implication $ (\ref{G^us complete 5}) \Rightarrow (\ref{G^us complete 1}) $.  
Let $ (v_{1}, \dots, v_{\ell}) $ be a degree ordering of $ G $. 
Since $ \deg_{G}(v_{i}) = \deg_{\tilde{G}}(v_{i}) =  \deg_{\tilde{G}^{+}}(v_{i}) + \deg_{\tilde{G}^{-}}(v_{i}) $ for every $ i \in \{1, \dots, \ell\} $ and $ \tilde{G}^{+} $ is complete, the ordering $ (v_{1}, \dots, v_{\ell}) $ is a degree ordering of $ \tilde{G}^{-} $. 
We proceed by induction on $ \ell $. 
The case $ \ell = 1 $ is trivial. 
Assume that $ \ell \geq 2 $. 
It is sufficient to prove that $ v_{\ell} $ is a link-simplicial vertex of $ G $. 

First assume that $ v_{\ell} $ is isolated in $ \tilde{G}^{-} $. 
Then $ v_{\ell} $ is connected to the other vertices with single edges in $ G $. 
Take two distinct single edges $ \{v_{i}, v_{\ell} \} $ and $ \{v_{j},v_{\ell}\} $. 
Since $ \tilde{G}^{+} $ is complete and the induced subgraph $ G[\{v_{i},v_{j},v_{\ell}\}] $ is not isomorphic to a unbalanced cycle by condition (\ref{II}), we have that $ \{v_{i},v_{j}\} $ is a double edge or $ G[\{v_{i},v_{j},v_{\ell}\}] $ is a balanced triangle. 
In both cases, the vertex $ v_{\ell} $ is a link-simplicial vertex of $ G $. 

Next we suppose that $ v_{\ell} $ is not isolated in $ \tilde{G}^{-} $. 
Since $ \tilde{G}^{-} $ is threshold,  $ v_{1} $ is a dominating vertex of $ \tilde{G}^{-} $. 
In other words, $ v_{1} $ is connected to the other vertices by double edges in $ G $. 
By the induction hypothesis $ v_{\ell} $ is link simplicial in $ G[\{v_{2}, \dots, v_{\ell}\}] $. 
Therefore $ v_{\ell} $ is link simplicial in $ G $. 
\end{proof}


\section{Balanced chordality of $ \tilde{G} $}\label{sec:BC of tildeG}
This section is devoted to prove the following crucial lemma. 
\begin{lemma}\label{G tilde balanced chordal}
If a signed graph $ G $ satisfies conditions (\ref{I}) (\ref{II}) (\ref{III}), then $ \tilde{G} $ is balanced chordal. 
\end{lemma}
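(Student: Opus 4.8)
The plan is to argue by minimal counterexample, using the chordality of $\tilde{G}^{+}$ (Proposition \ref{G^us chordal}) to reduce to a small forbidden configuration in $G$. Suppose the lemma fails and, among all signed graphs satisfying \ref{I}, \ref{II}, \ref{III} whose associated graph is not balanced chordal, choose one $G$ with $|V_{G}|$ minimal; inside it let $C = v_{1}v_{2}\cdots v_{k}$ be a balanced cycle of $\tilde{G}$ of length $k\ge 4$ with no balanced chord, of minimal length. Since $\tilde{G}[W] = \widetilde{G[W]}$ for every $W\subseteq V_{G}$, and since conditions \ref{I}, \ref{II}, \ref{III} pass to induced subgraphs, if $V_{C}\ne V_{G}$ then $G[V_{C}]$ would be a strictly smaller counterexample; hence $C$ is a Hamiltonian cycle of $\tilde{G}$ and $k=|V_{G}|\ge 4$. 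As $\tilde{G}^{+}$ is chordal, $C$ has a chord in $\tilde{G}^{+}$.

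Then I would make two easy reductions. First, a chord $e=\{v_{i},v_{j}\}$ of $C$ that is a \emph{double} edge of $\tilde{G}$ is automatically a balanced chord: pick the sign of $e$ making one of the two cycles into which it splits $C$ balanced, and the other is balanced as well because the numbers of negative edges of the two pieces have the same parity as that of $C$. Hence every chord of $C$ is a single positive edge of $\tilde{G}$. Second, give $v_{i}$ the parity $p(v_{i})\in\{0,1\}$ of the number of negative edges of $C$ among $v_{1}v_{2},\dots,v_{i-1}v_{i}$; then a single positive chord $\{v_{i},v_{j}\}$ is a balanced chord of $C$ exactly when $p(v_{i})=p(v_{j})$. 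So $C$ has no monochromatic chord; in particular $C$ has at least two negative edges (otherwise $p$ is constant and every chord would be balanced), each of which is a double edge of $\tilde{G}$, i.e. a double edge of $G$. Picking a vertex $v_{m}$ that is simplicial in $\tilde{G}^{+}$, its neighbours $v_{m-1},v_{m+1}$ are adjacent, so $\{v_{m-1},v_{m+1}\}$ is a single positive chord with $p(v_{m-1})\ne p(v_{m+1})$; thus exactly one of $v_{m-1}v_{m}$, $v_{m}v_{m+1}$ is a negative double edge of $C$.

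The heart of the proof — and the step I expect to be the main obstacle — is to turn this local data, together with the elimination structure of the chordal graph $\tilde{G}^{+}$, into a forbidden configuration of $G$. One route is to remove $v_{m}$ and invoke minimality: $\widetilde{G\setminus v_{m}} = \tilde{G}\setminus v_{m}$ is balanced chordal, so by re-routing $C$ around $v_{m}$ through the chord $\{v_{m-1},v_{m+1}\}$ and re-distributing the signs on the double edges of $C$, one either recovers a balanced chord of $C$ directly or is driven to an ever more rigid local pattern. A cleaner variant restricts to a maximal clique $Q$ of $\tilde{G}^{+}$, where $\tilde{G}[Q]$ is balanced chordal by the complete case (Proposition \ref{G^us complete}, together with Propositions \ref{simp tilde} and \ref{free => I}), so that $C$ cannot lie in a single clique and must cross a minimal separator of $\tilde{G}^{+}$ in two adjacent vertices, yielding further chords to analyse. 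Either way the situation should collapse to four vertices carrying the pattern of Figure \ref{Fig:obstruction}: two of them joined to the remaining two by double edges, the other pairs by single positive edges. At that point one restores the true signs of the single edges of $G$ on these vertices, normalizes by a switching function, and checks case by case that the induced subgraph of $G$ one obtains is an unbalanced cycle (contradicting \ref{II}), or is switching equivalent to the graph of Figure \ref{Fig:obstruction} (contradicting \ref{III}), or else contains a balanced cycle of $G$ of length at least four with no balanced chord (contradicting \ref{I}, with Proposition \ref{Gus comp I III => pm threshold} doing the work when $\tilde{G}^{+}$ is complete).

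The genuine difficulty is concentrated in this last translation. The graph $\tilde{G}$ records only the underlying graph of $G$ and which of its edges are double, forgetting the signs of the single edges; so one balanced cycle of $\tilde{G}$ corresponds to many cycles of $G$, and the double edges add further freedom in the choice of signs. Showing that every failure of balanced chordality of $\tilde{G}$ localizes to one of exactly the three forbidden patterns of $G$ — rather than to some larger, harder-to-name obstruction — is precisely the content of this lemma, and I expect it to require a careful, somewhat lengthy case analysis organized by the chordal elimination structure of $\tilde{G}^{+}$.
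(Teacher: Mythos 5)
Your opening reductions are correct and genuinely useful: passing to a minimal counterexample with a Hamiltonian chordless balanced cycle $C$, observing that any chord of $C$ that is a double edge of $G$ is automatically a balanced chord of $C$ (since the two arcs carry negative-edge counts of equal parity, one may choose the sign of the double edge to balance both pieces), introducing the parity classes $p(v_i)$ so that the remaining (single positive) chords must all join vertices of opposite parity, and extracting from a simplicial vertex of $\tilde{G}^{+}$ a triangle in which exactly one cycle edge is a negative double edge. These observations are consistent with, and partly overlap, what the paper does.

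However, the proposal has a genuine gap, and you flag it yourself: the entire combinatorial core of the lemma is left as ``I expect it to require a careful, somewhat lengthy case analysis.'' That case analysis is precisely the content of the paper's Proposition \ref{AB double chord}, a four-case induction on the length of a cycle of $G$ carrying two double edges, which produces a \emph{double} chord crossing between the two arcs $A$ and $B$ determined by those double edges; the lemma then follows because a double chord of $C'$ yields a balanced chord of $C$. Your guess about how the argument should close --- that the obstruction ``should collapse to four vertices carrying the pattern of Figure \ref{Fig:obstruction}'' --- does not match what actually happens and is not obviously achievable: in the paper, condition (\ref{I}) is applied repeatedly to many auxiliary balanced cycles obtained by re-signing the double edges of $C'$ to harvest positive and negative chords; these are combined into an unbalanced cycle which is shrunk (via Proposition \ref{G^us chordal}) to an unbalanced triangle; condition (\ref{II}) then forces a new double edge, which restarts the induction on a shorter cycle; and condition (\ref{III}) is invoked only in the length-four base case (through Proposition \ref{Gus comp I III => pm threshold} and the threshold characterization). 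There is no single four-vertex configuration to which the failure localizes, so the decisive step of your plan is both missing and, as stated, pointed in a direction that would need substantial repair.
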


In order to prove the previous lemma, we need the following proposition. 
\begin{proposition}\label{AB double chord}
Suppose that $ G $ is a signed graph satisfying conditions (\ref{I}) (\ref{II}) (\ref{III}) and let $ C $ be a cycle of $ G $ of length four or more admitting two double edges. 
Let $ A,B $ be the vertex sets of the connected components of the graph obtained by removing these double edges from $ C $. 
If $ C $ has no chords connecting vertices within $ A $ and within $ B $, then $ C $ has a double chord between a vertex in $ A $ and a vertex in $ B $.
\end{proposition}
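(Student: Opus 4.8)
The plan is to argue by induction on the length of the cycle $ C $, using conditions (\ref{I}), (\ref{II}), (\ref{III}) to produce a chord of $ C $ and then to control its sign and its endpoints. Write $ C = d_{1}\, p_{1}\, \dots\, p_{r}\, d_{2}\, q_{1}\, \dots\, q_{s} $ where $ d_{1} = \{p_{r}, d_{2}\text{-end}\} $ and $ d_{2} $ are the two double edges, $ A = \{p_{1}, \dots, p_{r}\} $ is one path-component and $ B = \{q_{1}, \dots, q_{s}\} $ the other, after removing $ d_{1} $ and $ d_{2} $. By hypothesis $ C $ has no $ A $--$ A $ chord and no $ B $--$ B $ chord, so every chord of $ C $ goes between $ A $ and $ B $; the goal is to find one that is double. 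First I would dispose of the degenerate cases $ |A| = 1 $ or $ |B| = 1 $, where $ C $ has length $ \le 4 $ plus the two double edges constraints force a short analysis, and the statement can be checked directly (for length exactly four this is essentially the $ 2K_{2} $ / $ C_{4} $ analysis already carried out in Proposition \ref{Gus comp I III => pm threshold}).

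For the inductive step, consider the cycle $ C $ after switching so that, say, $ d_{1} $ and $ d_{2} $ are both positive single edges together with negative copies (double), and all the single edges on the paths $ A $, $ B $ are positive. Since $ C $ (read with all chosen signs positive) is a balanced cycle of length $ \ge 4 $, condition (\ref{I}) gives a balanced chord; since all chords are $ A $--$ B $, this is an $ A $--$ B $ chord $ e = \{p_{i}, q_{j}\} $ splitting $ C $ into two balanced cycles $ C_{1}, C_{2} $. Each of $ C_{1}, C_{2} $ still contains exactly one of the two double edges $ d_{1}, d_{2} $ — unless one of them has length three, i.e., $ e $ cuts off a triangle containing one double edge. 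I would handle these triangle cases separately: a balanced triangle with one double edge and the new chord $ e $ forces, by switching and condition (\ref{I}) (or the triangle being balanced), the chord $ e $ to be double, which is exactly what we want; and an unbalanced triangle is forbidden outright by condition (\ref{II}) unless it carries a double edge, again yielding a double chord. In the remaining generic situation each $ C_{k} $ is a shorter cycle with two double edges (the old $ d_{k} $ and the chord $ e $, if $ e $ happens to be double we are already done, so assume $ e $ single) whose two path-components sit inside $ A $ and $ B $ respectively, so no intra-component chords exist in $ C_{k} $ either; the induction hypothesis applies to the shorter $ C_{k} $ and produces a double $ A $--$ B $ chord of $ C_{k} $, which is also a chord of $ C $, completing the step.

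The main obstacle — and the place where condition (\ref{III}) must enter — is precisely when the inductively produced chord, or the first chord from (\ref{I}), is single: we then have a $ 4 $-vertex configuration consisting of two double edges $ d_{1} = \{a,b\} $, $ d_{2} = \{c,d\} $ with $ a, c \in A $-side, $ b, d \in B $-side (schematically), a single edge closing the quadrilateral, and a single diagonal. After switching to make the double edges' positive copies and the boundary singles positive, condition (\ref{I}) applied to the two balanced $ 4 $-cycles $ abcda $ and $ [ab][cd]a $ pins down the signs of the two diagonals to be opposite — one positive single, one negative single — and that is exactly the forbidden induced subgraph of Figure \ref{Fig:obstruction}, contradicting (\ref{III}). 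Hence the single-chord scenario cannot persist, and iterating (replacing $ C $ by the shorter cycle through a single $ A $--$ B $ chord and re-applying (\ref{I})) must eventually terminate in a double $ A $--$ B $ chord. I expect the bookkeeping of which double edge lands in which sub-cycle, and the careful switching normalization so that (\ref{III}) can be invoked, to be the fiddly part; everything else is a routine application of the already-established Propositions and the definition of balanced chordality.
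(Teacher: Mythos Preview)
Your inductive step has a genuine gap. After you use condition (\ref{I}) to produce a balanced $A$--$B$ chord $e$ and split $C$ into $C_{1}$ and $C_{2}$, you write that each $C_{k}$ is ``a shorter cycle with two double edges (the old $d_{k}$ and the chord $e$, if $e$ happens to be double we are already done, so assume $e$ single)'' --- but these two clauses are incompatible: once you assume $e$ is single, each $C_{k}$ carries only \emph{one} double edge, and the induction hypothesis (which requires two double edges) does not apply. Your attempted rescue, iterating through single $A$--$B$ chords until a double one appears, does not work either: at every stage the shorter cycle still has just one double edge, and nothing in (\ref{I}),~(\ref{II}),~(\ref{III}) forces a double chord on a cycle with a single double edge. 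The final reduction to the four-vertex Figure~\ref{Fig:obstruction} configuration is only justified when you are already at length four with two double edges present; you never explain how to reach that situation from a long cycle once a single chord has destroyed one of the two doubles. (Your triangle claim is also false: a balanced triangle with one double edge and two single edges is perfectly consistent with all three conditions and does not force the chord to be double.)

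The paper avoids this problem by a different mechanism. Instead of cutting $C$ with one chord, it produces \emph{two} chords of opposite sign --- a positive one from the balanced reading $a_{1}b_{1}\cdots b_{s}a_{r}\cdots a_{1}$ and a negative one from the balanced reading $[a_{1}b_{1}]b_{2}\cdots b_{s-1}[b_{s}a_{r}]\cdots a_{1}$ --- and if these do not coincide (which would already give a double chord), then together with pieces of $C$ they bound an unbalanced cycle. Chordality of $\tilde{G}^{+}$ (Proposition~\ref{G^us chordal}) shrinks this to an unbalanced triangle; by (\ref{II}) that triangle must contain a double edge, which is either an $A$--$B$ chord of $C$ (done) or an edge of $C$ itself. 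In the latter case $C$ now carries a \emph{third} double edge, and this is what lets one pass to a strictly shorter cycle that still has two double edges and still satisfies the no-internal-chord hypothesis, so the induction goes through. Condition~(\ref{III}) is used only in the length-four base case, via Proposition~\ref{G^us complete}, not as the engine of the induction. There is also a separate delicate case ($|A|=2$ with the $A$-edge double) that needs its own argument; your outline does not touch it.
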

\begin{proof}
We will show the assertion by induction on the length of $ C $ and separate the cases. 
Without loss of generality, we may assume that all the edges of $ C $ are positive by switching. 
We label the vertices of $ C $ as shown in Figure \ref{Fig:cycle and double edges}
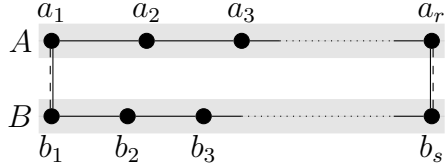
\begin{figure}[t]
\centering
\begin{tikzpicture}
\draw (0,1) node[v,label=above:{$ a_{1} $}, label=left:{$ A $}](a1){}; 
\draw (1.25,1) node[v,label=above:{$ a_{2} $}](a2){}; 
\draw (2.5,1) node[v,label=above:{$ a_{3} $}](a3){}; 
\draw (5,1) node[v,label=above:{$ a_{r} $}](ar){}; 
\draw (0,0) node[v,label=below:{$ b_{1} $}, label=left:{$ B $}](b1){};
\draw (1,0) node[v,label=below:{$ b_{2} $}](b2){};  
\draw (2,0) node[v,label=below:{$ b_{3} $}](b3){};  
\draw (5,0) node[v,label=below:{$ b_{s} $}](bs){};  
\draw[decoration={dashsoliddouble}, decorate] (b1)--(a1);
\draw[decoration={dashsoliddouble}, decorate] (ar)--(bs);
\draw (a1)--(a2)--(a3)--(3,1);
\draw[dotted] (3,1)--(4.5,1);
\draw (4.5,1)--(ar);
\draw (b1)--(b2)--(b3)--(2.5,0);
\draw[dotted] (2.5,0)--(4.5,0);
\draw (4.5,0)--(bs);
\begin{pgfonlayer}{background}
\draw[e,gray] (-.17,1)--(5.17,1);
\draw[e,gray] (-.17,0)--(5.17,0);
\end{pgfonlayer}
\end{tikzpicture}
\caption{The cycle $ C $  with the double edges}\label{Fig:cycle and double edges}
\end{figure}
\begin{figure}[t]
\centering
\begin{tikzpicture}
\draw (1.5,1) node[v,label=above:{$ a_{1} $}](a1){};
\draw (0,0) node[v,label=below:{$ b_{1} $}](b1){};
\draw (1,0) node[v,label=below:{$ b_{2} $}](b2){};  
\draw (3,0) node[v,label=below:{$ b_{s} $}](bs){};  
\draw[decoration={dashsoliddouble}, decorate] (b1)--(a1);
\draw[decoration={dashsoliddouble}, decorate] (a1)--(bs);
\draw (b1)--(b2)--(1.5,0);
\draw[dotted] (1.5,0)--(2.5,0);
\draw (2.5,0)--(bs);
\draw (1.5,-1.25) node(){(a)};
\end{tikzpicture}
\qquad
\begin{tikzpicture}
\draw (0,1) node[v,label=above:{$ a_{1} $}](a1){};
\draw (1,1) node[v,label=above:{$ a_{2} $}](a2){};
\draw (0,0) node[v,label=below:{$ b_{1} $}](b1){};
\draw (1,0) node[v,label=below:{$ b_{2} $}](b2){};
\draw[decoration={dashsoliddouble}, decorate] (b1)--(a1);
\draw[decoration={dashsoliddouble}, decorate] (a2)--(b2);
\draw (a1)--(a2);
\draw (b1)--(b2);
\draw (.5,-1.25) node(){(b)};
\end{tikzpicture}
\qquad
\begin{tikzpicture}
\draw (1,1) node[v,label=above:{$ a_{1} $}](a1){};
\draw (2,1) node[v,label=above:{$ a_{2} $}](a2){};
\draw (0,0) node[v,label=below:{$ b_{1} $}](b1){};
\draw (1,0) node[v,label=below:{$ b_{2} $}](b2){};  
\draw (3,0) node[v,label=below:{$ b_{s} $}](bs){};  
\draw[decoration={dashsoliddouble}, decorate] (b1)--(a1);
\draw[decoration={dashsoliddouble}, decorate] (a2)--(bs);
\draw[decoration={dashsoliddouble}, decorate] (a1)--(a2);
\draw (b1)--(b2)--(1.5,0);
\draw[dotted] (1.5,0)--(2.5,0);
\draw (2.5,0)--(bs);
\draw (1.5,-1.25) node(){(c)};
\end{tikzpicture}
\caption{Cases 1, 2, and 3}\label{Fig:case123}
\end{figure}
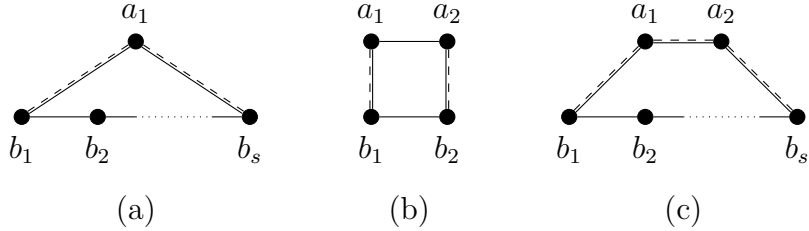
\begin{figure}[t]
\centering
\begin{tikzpicture}
\draw (0,1) node[v,label=above:{$ a_{1} $}](a1){}; 
\draw (1.25,1) node[v,label=above:{$ a_{n} $}](an){}; 
\draw (2.5,1) node[v,label=above:{$ a_{m} $}](am){}; 
\draw (3.75,1) node[v,label=above:{$ a_{r} $}](ar){}; 
\draw (0,0) node[v,label=below:{$ b_{1} $}](b1){};
\draw (1,0) node[v,label=below:{$ b_{2} $}](b2){};  
\draw (3.75,0) node[v,label=below:{$ b_{s} $}](bs){};  
\draw[decoration={dashsoliddouble}, decorate] (b1)--(a1);
\draw[decoration={dashsoliddouble}, decorate] (ar)--(bs);
\draw (a1)--(0.375,1);
\draw[dotted] (0.375,1)--(.825,1);
\draw (.825,1)--(an)--(1.625,1);
\draw[dotted] (1.625,1)--(2.125,1);
\draw (2.125,1)--(am)--(2.825,1);
\draw[dotted] (2.825,1)--(3.375,1);
\draw (3.375,1)--(ar);
\draw (b1)--(b2)--(1.325,0);
\draw[dotted] (1.325,0)--(3.375,0);
\draw (3.375,0)--(bs);
\draw (an)--(b1);
\draw (am)--(bs);
\draw (1.875,-1.25) node(){(a)};
\end{tikzpicture}
\quad
\begin{tikzpicture}
\draw (0,1) node[v,label=above:{$ a_{1} $}](a1){}; 
\draw (1.25,1) node[v,label=above:{$ a $}](a){}; 
\draw (2.5,1) node[v,label=above:{$ a^{\prime} $}](ap){}; 
\draw (3.75,1) node[v,label=above:{$ a_{r} $}](ar){}; 
\draw (0,0) node[v,label=below:{$ b_{1} $}](b1){};
\draw (1,0) node[v,label=below:{$ b $}](b){};  
\draw (2.75,0) node[v,label=below:{$ b^{\prime} $}](bp){};  
\draw (3.75,0) node[v,label=below:{$ b_{s} $}](bs){};  
\draw[decoration={dashsoliddouble}, decorate] (b1)--(a1);
\draw[decoration={dashsoliddouble}, decorate] (ar)--(bs);
\draw (a1)--(0.375,1);
\draw[dotted] (0.375,1)--(.825,1);
\draw (.825,1)--(a)--(1.625,1);
\draw[dotted] (1.625,1)--(2.125,1);
\draw (2.125,1)--(ap)--(2.825,1);
\draw[dotted] (2.825,1)--(3.375,1);
\draw (3.375,1)--(ar);
\draw (b1)--(0.375,0);
\draw[dotted] (0.375,0)--(0.675,0);
\draw (0.675,0)--(b)--(1.325,0);
\draw[dotted] (1.325,0)--(2.325,0);
\draw (2.325,0)--(bp)--(3.125,0);
\draw[dotted] (3.125,0)--(3.375,0);
\draw (3.375,0)--(bs);
\draw (a)--(b);
\draw[dashed] (ap)--(bp);
\draw (1.875,-1.25) node(){(b)};
\end{tikzpicture}
\quad
\begin{tikzpicture}
\draw (0,1) node[v,label=above:{$ a_{1} $}](a1){}; 
\draw (1,1) node[v,label=above:{$ a_{m} $}](am){}; 
\draw (2.5,1) node[v,label=above:{$ a_{i} $}](ai){}; 
\draw (3.75,1) node[v,label=above:{$ a_{r} $}](ar){}; 
\draw (0,0) node[v,label=below:{$ b_{1} $}](b1){};
\draw (1.75,0) node[v,label=below:{$ b_{j-1} $}](bj-1){};  
\draw (2.75,0) node[v,label=below:{$ b_{j} $}](bj){};  
\draw (3.75,0) node[v,label=below:{$ b_{s} $}](bs){};  
\draw[decoration={dashsoliddouble}, decorate] (b1)--(a1);
\draw[decoration={dashsoliddouble}, decorate] (ar)--(bs);
\draw[decoration={dashsoliddouble}, decorate] (bj)--(bj-1);
\draw (a1)--(0.375,1);
\draw[dotted] (0.375,1)--(.675,1);
\draw (.675,1)--(am)--(1.325,1);
\draw[dotted] (1.325,1)--(2.125,1);
\draw (2.125,1)--(ai)--(2.825,1);
\draw[dotted] (2.825,1)--(3.375,1);
\draw (3.375,1)--(ar);
\draw (b1)--(0.375,0);
\draw[dotted] (0.375,0)--(1.375,0);
\draw (1.375,0)--(bj-1);
\draw (bj)--(3.125,0);
\draw[dotted] (3.125,0)--(3.375,0);
\draw (3.375,0)--(bs);
\draw[dashed] (ai)--(bj-1);
\draw (ai)--(bj)--(am);
\draw (2.4,.48) node(){\scalebox{.7}{$ T $}};
\draw (1.875,-1.25) node(){(c)};
\end{tikzpicture}
\caption{Case 4}\label{Fig:case4}
\end{figure}
\begin{case}\label{case1}
Assume that $ |A| = 1 $ (Figure \ref{Fig:case123}a). 
By induction, we will show that every chord $ \{a_{1},b_{i}\} \quad (2 \leq i \leq s-1) $ is double. 

First assume that the length of $ C $ equals four, that is, $ s=3 $. 
By condition (\ref{I}), the balanced cycle $ a_{1}b_{1}b_{2}b_{3}a_{1} $ yields a positive chord $ \{a_{1},b_{2}\} $. 
Similarly, considering the balanced cycle $ [a_{1}b_{1}]b_{2}[b_{3}a_{1}] $, we obtain a negative chord $ \{a_{1}, b_{2}\} $. 
Thus $ \{a_{1},b_{2}\} $ is a double chord. 

Next, we assume that $ s \geq 4 $. 
By condition (\ref{I}), we obtain a positive chord $ \{a_{1},b_{i}\} $ for some $ i \in \{2, \dots, s-1\} $, which separates our cycle $ C $ into two smaller cycles. 
By the induction hypothesis, we may conclude that the assertion is true. 
\end{case}
\begin{case}
Assume that the length of $ C $ equals four. 
If $ |A|=1 $ or $ |B|=1 $, then the assertion holds by Case \ref{case1}. 
Hence we may assume that $ |A|=|B|=2 $ (Figure \ref{Fig:case123}b). 
The balanced cycles $ a_{1}b_{1}b_{2}a_{2}a_{1} $ and $ [a_{1}b_{1}][b_{2}a_{2}]a_{1} $ yield a positive and a negative chord, respectively. 
If these two chords coincide, then we obtain a desired double chord. 
If not, the simple graph $ \widetilde{G[A \cup B]}^{+} $ is complete. 
Therefore, by Proposition \ref{G^us complete}, we have that the simple graph $ \widetilde{G[A \cup B]}^{-} $ is threshold. 
By Theorem \ref{forbidden induced subgraphs threshold}, one of $ \{a_{1},b_{2}\} $ and $ \{a_{2},b_{1}\} $ is double. 
\end{case}
\begin{case}\label{case3}
Assume that $ |A|=2 $ and $ \{a_{1},a_{2}\} $ is double (Figure \ref{Fig:case123}c). 
Then the balanced cycle $ a_{1}b_{1}\cdots b_{s}a_{2}a_{1} $ yields a positive chord between a vertex in $ A $ and a vertex in $ B $. 
By symmetry, we may assume that one of the endvertices is $ a_{1} $. 
Take a positive chord $ \{a_{1},b_{m}\} $ with $ m $ maximal. 
If $ m < s $, then the cycle $ a_{1}b_{m}b_{m+1}\cdots b_{s}a_{2}a_{1} $ is of length four or more admitting double edges $ \{a_{1},a_{2}\} $ and $ \{a_{2},b_{2}\} $. 
Applying the result of Case \ref{case1}, every $ \{a_{2},b_{i}\} \quad (m \leq i \leq s) $ is double. 
Therefore we may assume that $ m=s $, that is, there exists the positive chord $ \{a_{1},b_{s}\} $. 
Since the cycle $ a_{1}b_{1}\cdots b_{s}a_{1} $ is balanced, every $ \{a_{1},b_{i}\} \quad (2 \leq i \leq s) $ is a positive chord. 
Considering the balanced cycle $ [a_{1}b_{1}]b_{2}\cdots b_{s}[a_{2}a_{1}] $, we obtain a balanced chord. 
If one of the endvertices of this chord is $ a_{1} $, then this chord is negative and hence we have a double chord. 
Therefore we may assume that there is a positive chord between $ a_{2} $ and a vertex in $ B $. 
Let $ \{a_{2},b_{m}\} $ be a positive chord with $ m $ minimal. 
By the similar discussion above, we may assume that $ m=1 $ and we obtain positive chords $ \{a_{2}, b_{i}\} \quad (1 \leq i \leq s) $. 
The balanced cycle $ [a_{1}b_{1}]b_{2}\cdots b_{s-1} [b_{s}a_{2}]a_{1} $ yields a negative chord, which must be double. 
\end{case}
\begin{case}
Assume that the length of $ C $ is at least five. 
By Case \ref{case1}, we may suppose that $ |A| \geq 2 $ and $ |B| \geq 3 $. 
First, we show that there exists a positive chord $ \{a,b\} $ such that $ a \in A $ and $ b \in B\setminus\{b_{1},b_{s}\} $. 
By condition (\ref{I}), there exists a positive chord between a vertex in $ A $ and a vertex in $ B $. 
We may assume that one of the endvertices of this chord is $ b_{s} $. 
Let $ \{a_{m},b_{s}\} $ be the positive chord with $ m $ minimal. 
Since $ s = |B| \geq 3 $, the balanced cycle $ a_{1}b_{1}\cdots b_{s}a_{m}\cdots a_{1} $ is of length four or more and hence has a positive chord. 
We may assume that one of the endvertices of this chord is $ b_{1} $ and take the positive chord $ \{a_{n},b_{1}\} $ such that $ n \leq m $ and $ n $ is maximal (Figure \ref{Fig:case4}a). 
The balanced cycle $ a_{n}b_{1}\cdots b_{s}a_{m} \cdots a_{n} $ is of length four or more and hence we obtain a desired positive chord $ \{a,b\} $. 
Using the balanced cycle $ [a_{1}b_{1}]b_{2}\cdots b_{s-1}[b_{s}a_{r}]\cdots a_{1} $, we have a negative chord $ \{a^{\prime},b^{\prime}\} $ such that $ a^{\prime} \in A $ and $ b^{\prime} \in B \setminus \{b_{1},b_{s}\} $ in a similar way. 

If the positive chord $ \{a,b\} $ and the negative chord $ \{a^{\prime},b^{\prime}\} $ coincide, then this is a desired double chord. 
Otherwise, the positive chord $ \{a,b\} $, the negative chord $ \{a^{\prime},b^{\prime}\} $, the path between $ a $ and $ a^{\prime} $, and the path between $ b $ and $ b^{\prime} $ form an unbalanced cycle (Figure \ref{Fig:case4}b). 
If the length of this unbalanced cycle is four or more, then it has a chord by Proposition \ref{G^us chordal}. 
This chord separates the unbalanced cycle into smaller balanced and unbalanced cycles. 
Applying the same argument to the smaller unbalanced cycle one after another, we obtain a unbalanced triangle $ T $ consisting of a positive and a negative chords of $ C $ and an edge $ e $ lying in $ A $ or $ B $. 
If one of these two chords is double, then this is a desired double chord. 
Hence we may assume that $ e $ is double by condition (\ref{II}). 

If $ |A|=2 $ and $ e = \{a_{1},a_{2}\} $, then we have a double chord by Case \ref{case3}. 
Hence we may assume that $ |A| \geq 3 $ or the edge $ e $ lies in $ B $. 
In both cases, without loss of generality, we may assume that the vertex set of $ T $ is $ \{a_{i},b_{j-1},b_{j}\} $ for some $ i \in \{1,\dots,r\} $ and $ j \in \{3,\dots,s\} $ by symmetry (Figure \ref{Fig:case4}c). 
Let $ m $ be the minimal number such that the chord $ \{a_{m},b_{j}\} $ exists. 
By switching, we may assume that this chord is positive. 
The balanced cycle $ a_{1}b_{1} \cdots b_{j}a_{m} \cdots a_{1} $ is of length four or more and satisfies the hypothesis of this proposition. 
By the induction hypothesis, this cycle has a double chord, which is a desired double chord of $ C $. 
\end{case}
\end{proof}
\begin{proof}[Proof of Lemma \ref{G tilde balanced chordal}]
Let $ C $ be a balanced cycle of $ \tilde{G} $ of length at least four. 
If every edge of $ C $ is positive, then $ C $ has a positive chord by Proposition \ref{G^us chordal}. 
Hence we may assume that $ C $ has at least two negative edges. 
Let $ C^{\prime} $ be a cycle of $ G $ corresponding to $ C $. 
Since $ C $ has at least two negative edges, the cycle $ C^{\prime} $ admits at least two double edges. 
Hence we may assume that $ C^{\prime} $ is a cycle as shown in Figure \ref{Fig:cycle and double edges}. 
Without loss of generality, we may assume that $ |A| \leq |B| $. 

Let $ C^{\prime\prime} $ be a cycle of $ G $ on the vertices $ a_{1},b_{1},a_{r},b_{s}, $ and some vertices in $ A $ and $ B $ such that there is no chord lying in both $ A $ and $ B $. 
Suppose that the length of $ C^{\prime\prime} $ is three. 
This happens only when $ |A|=1 $ and $ C^{\prime\prime} $ is a cycle on the vertices $ \{a_{1},b_{1},b_{s}\} $. 
The edge $ \{b_{1},b_{s}\} $ leads to a positive chord of $ C $. 
Therefore we may assume that the length of $ C^{\prime\prime} $ is at least four. 
By Proposition \ref{AB double chord}, the cycle $ C^{\prime\prime} $ has a double chord between a vertex in $ A $ and a vertex in $ B $, which is also a double chord of $ C^{\prime} $. 
Choosing a sign of the corresponding double chord of $ C $, we may say that $ C $ has a balanced chord. 
Thus $ \tilde{G} $ is balanced chordal. 
\end{proof}

\section{Proof of Theorem \ref{main theorem}}\label{sec:PROOF}
The following result played a key role in \cite{suyama2019signed-dm}. 
\begin{proposition}[{\cite[Proposition 6.5 and Lemma 6.9]{suyama2019signed-dm}}]\label{STT linksimp-gluing}
Let $ G $ be a signed graph with $ G^{+} \supseteq G^{-} $. 
If $ G $ is balanced chordal, then one of the following holds: 
\begin{enumerate}[(i)]
\item\label{i} $ G $ has a link-simplicial vertex. 
\item\label{ii} There exist induced subgraphs $ G_{1} $ and $ G_{2} $ such that $ G_{1} \cup G_{2} = G $ and $ G_{1} \cap G_{2} = K^{\pm}_{n} $, where $ K^{\pm}_{n} \coloneqq (K_{n}, K_{n}) $ denotes a signed graph on $ n $ vertices with all possible edges, which is called the \textbf{complete signed graph}. 
\end{enumerate}
\end{proposition}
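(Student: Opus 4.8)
The plan is to argue by induction on $|V_{G}|$, following the template of Dirac's clique-separator decomposition of chordal graphs while bookkeeping which edges of $G$ are \emph{double}, i.e.\ lie in $G^{+}\cap G^{-}$ (recall that, since $G^{+}\supseteq G^{-}$, every negative edge is double and every single edge is positive). First dispose of the easy cases: if $|V_{G}|\le 1$ there is nothing to prove; if $G^{+}$ is disconnected, let $G_{1}$ be one connected component and $G_{2}$ the union of the remaining ones, so that $G_{1}\cap G_{2}=K^{\pm}_{0}$ and (\ref{ii}) holds; and if $G^{+}$ is complete, then $G$ automatically satisfies (\ref{II}) and (\ref{III}) (there are no single negative edges, so no induced unbalanced cycle and no induced copy switching equivalent to Figure~\ref{Fig:obstruction}), whence Proposition~\ref{G^us complete} produces a link elimination ordering, in particular a link-simplicial vertex, and (\ref{i}) holds. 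Hence we may assume $G^{+}$ is connected and not complete, and applying balanced chordality to all-positive cycles (as in Proposition~\ref{G^us chordal}, with (\ref{II}) automatic) we see $G^{+}$ is chordal.

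Two observations drive the argument. The first is a criterion valid under $G^{+}\supseteq G^{-}$: a vertex $v$ is link simplicial if and only if $N_{G^{+}}(v)$ is a clique of $G^{+}$ and every vertex joined to $v$ by a double edge is joined by a double edge to each of the other vertices of $N_{G^{+}}(v)$ --- for a triangle can be realized as balanced unless it is forced to contain exactly one negative edge, and at the apex $v$ this is forced precisely when a double side meets a single side while the opposite side is single. The second is that, for every maximal clique $Q$ of $G^{+}$, the induced subgraph $G[Q]$ satisfies (\ref{I})--(\ref{III}) and has $\widetilde{G[Q]}^{+}=G^{+}[Q]$ complete, so Proposition~\ref{G^us complete} shows that the simple graph $G^{-}[Q]$ of double edges inside $Q$ is \emph{threshold}.

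Now fix a clique tree of $G^{+}$ and a leaf maximal clique $Q$, attached to the rest of $G$ along a minimal separator $S\subsetneq Q$; every $v\in Q\setminus S$ is simplicial in $G^{+}$ and has $N_{G}(v)=Q\setminus\{v\}$. If some $v\in Q\setminus S$ is link simplicial in $G$ we are in case (\ref{i}); by the criterion this holds, for instance, whenever some $v\in Q\setminus S$ is isolated in the threshold graph $G^{-}[Q]$, and more generally whenever all double-neighbours of some $v\in Q\setminus S$ are dominating vertices of $G^{-}[Q]$. In the remaining situation the target is to show that $G[S]$ is itself a complete signed graph $K^{\pm}_{|S|}$; granting this, set $G_{1}=G[Q]$ and $G_{2}=G[V_{G}\setminus(Q\setminus S)]$ --- these are proper induced subgraphs with $G_{1}\cup G_{2}=G$ and $G_{1}\cap G_{2}=G[S]=K^{\pm}_{|S|}$ --- so (\ref{ii}) holds.

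The main obstacle is exactly this last step: excluding a single edge inside $S$ under the assumption that $G$ has no link-simplicial vertex. It forces one to propagate double edges \emph{across} the clique tree, in the same spirit as the propagation of double chords along a cycle in the proof of Proposition~\ref{AB double chord}. Concretely, from a single edge $st$ inside $S$ together with the failure of link-simpliciality of the vertices of $Q\setminus S$ --- which, via the threshold structure of $G^{-}[Q]$, supplies suitable double edges from $Q\setminus S$ into $S$ --- one builds balanced cycles running through $s$, $t$ and the part of $G$ beyond $S$, and exploits that their forced \emph{balanced} chords cannot, by minimality of $S$ together with the threshold structure, lie entirely on one side, hence must be double edges; iterating and combining with the inductive hypothesis applied to the smaller graphs $G[Q]$ and $G[V_{G}\setminus(Q\setminus S)]$ should upgrade $G[S]$ to a complete signed graph. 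I expect the sign bookkeeping in this propagation --- sliding balanced chords through ever smaller cycles, as in the case analysis of Proposition~\ref{AB double chord} --- to be the genuinely delicate part.
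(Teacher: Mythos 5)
This paper does not prove Proposition~\ref{STT linksimp-gluing}; it imports it from \cite[Proposition 6.5 and Lemma 6.9]{suyama2019signed-dm}, so there is no in-text proof to compare against. Judged on its own terms, your proposal sets up a reasonable frame (reduction to $G^{+}$ chordal, the clique tree of $G^{+}$, the correct criterion for link simpliciality under $G^{+}\supseteq G^{-}$, and threshold structure of $G^{-}[Q]$ inside each maximal clique via Proposition~\ref{G^us complete}), but the decisive step is not proved: you only assert that balanced chords of cycles through $S$ ``cannot lie entirely on one side, hence must be double edges'' and that iterating ``should upgrade $G[S]$ to a complete signed graph.'' A balanced chord is a single sign choice; forcing a \emph{double} edge requires exhibiting two different balanced cycles (or a propagation argument of the kind in Proposition~\ref{AB double chord}) and then resolving the case where the resulting positive and negative chords do not coincide. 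None of this is carried out, and it is exactly the content of the cited Lemma 6.9. As written, the proposal is a plan, not a proof.

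There is also a structural problem with the dichotomy you set up per leaf clique. Take $V_{G}=\{v,s,t,u\}$ with $\{v,s\}$ double and $\{v,t\},\{s,t\},\{u,s\},\{u,t\}$ single positive. This $G$ satisfies $G^{+}\supseteq G^{-}$ and is balanced chordal; the leaf clique $Q=\{v,s,t\}$ has separator $S=\{s,t\}$, the only vertex $v\in Q\setminus S$ is \emph{not} link simplicial (the pair consisting of the negative edge $\{v,s\}$ and the positive edge $\{v,t\}$ has no balancing third edge), and $G[S]$ is \emph{not} $K^{\pm}_{2}$. So the claim ``no vertex of $Q\setminus S$ is link simplicial $\Rightarrow G[S]$ is complete signed'' is false for an arbitrary leaf; the proposition only survives here because $u$ happens to be link simplicial elsewhere. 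Your argument must therefore be run under the global hypothesis that $G$ has no link-simplicial vertex at all, and the information it then needs to import from the far side of $S$ is not controlled by the threshold structure of $G^{-}[Q]$ alone. This is the genuine gap; the rest of the outline is fine.
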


If $ G $ is a signed graph with property (\ref{ii}) in Proposition \ref{STT linksimp-gluing}, then the characteristic polynomial of $ \mathcal{A}(G) $ is decomposed as 
\begin{align*}
\chi(\mathcal{A}(G), t) = \dfrac{\chi(\mathcal{A}(G_{1}),t) \, \chi(\mathcal{A}(G_{2}),t)}{\chi(\mathcal{A}(K^{\pm}_{n}),t)}
\end{align*}
as shown in \cite[Theorem 4.10 and Lemma 4.13]{suyama2019signed-dm}. 
The decomposition is a special case of a property of generalized parallel connections of simple matroids studied by Brylawski \cite{brylawski1975modular-totams}. 
In \cite{tsujie2020modular-a}, the second author investigated some arrangements obtained by modular joins (a special kind of generalized parallel connections). 
For signed graphs, we have the following proposition. 

\begin{proposition}[{\cite[Theorem 4.8]{tsujie2020modular-a}}]\label{modular join}
Let $ \mathfrak{M} $ be the minimal class of signed graphs satisfying the following conditions. 
\begin{enumerate}[(i)]
\item The null graph is a member of $ \mathfrak{M} $. 
\item If $ G $ has a link-simplicial vertex $ v $ and $ G\setminus v \in \mathfrak{M} $, then $ G \in \mathfrak{M} $. 
\item If there exist induced subgraphs $ G_{1} $ and $ G_{2} $ of a signed graph of $ G $ such that $ G_{1} \cup G_{2} = G $ and $ G_{1} \cap G_{2} = K^{\pm}_{n} $ for some $ n $ and $ G_{1}, G_{2} \in \mathfrak{M} $, then $ G \in \mathfrak{M} $. 
\end{enumerate}
Then for every $ G \in \mathfrak{M} $, the corresponding arrangement $ \mathcal{A}(G) $ is divisionally free. 
\end{proposition}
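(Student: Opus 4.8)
The plan is to prove the statement by structural induction on the class $\mathfrak{M}$. Since $\mathfrak{M}$ is the \emph{minimal} class closed under conditions (i), (ii), (iii), it suffices to check that $\mathcal{A}(G)$ is divisionally free for the null graph, and that operations (ii) and (iii) each preserve divisional freeness. The engine throughout is Abe's Division Theorem (Theorem~\ref{Abe division}). The one elementary fact I would isolate first is that restriction at a coordinate hyperplane stays inside the signed-graphic world: for any vertex $w$ of a signed graph $G$ and $H=\{x_{w}=0\}$ one has $\mathcal{A}(G)^{H}=\mathcal{A}(G\setminus w)$, because every hyperplane $\{x_{i}\pm x_{w}=0\}$ restricts to $\{x_{i}=0\}$, which is already present. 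So to run the induction at a coordinate hyperplane it is enough to control the two inputs of the Division Theorem: divisional freeness of $\mathcal{A}(G\setminus w)$, and the divisibility $\chi(\mathcal{A}(G\setminus w),t)\mid\chi(\mathcal{A}(G),t)$.

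The base case is immediate, and operation (ii) is the easy one. Suppose $v$ is link simplicial in $G$ and $G\setminus v\in\mathfrak{M}$, so $\mathcal{A}(G\setminus v)$ is divisionally free by the inductive hypothesis. Take $H=\{x_{v}=0\}$, so that $\mathcal{A}(G)^{H}=\mathcal{A}(G\setminus v)$. It remains to see that $\chi(\mathcal{A}(G\setminus v),t)$ divides $\chi(\mathcal{A}(G),t)$; in fact I would prove the clean factorization $\chi(\mathcal{A}(G),t)=(t-m)\,\chi(\mathcal{A}(G\setminus v),t)$, where $m$ is the number of hyperplanes of $\mathcal{A}(G)$ whose defining form involves $x_{v}$. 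The point is that link simpliciality of $v$ is exactly the condition that the coordinate projection $\mathbb{K}^{\ell}\to\mathbb{K}^{\ell-1}$ forgetting $x_{v}$ has constant fibre cardinality $m$ over the complement of $\mathcal{A}(G\setminus v)$: over a point whose remaining coordinates avoid all hyperplanes of $\mathcal{A}(G\setminus v)$, the hyperplanes involving $x_{v}$ cut out the values $x_{v}=0$ and $x_{v}=\pm x_{i}$ (one per edge $\{i,v\}$, the sign being that of the edge), and for any two such edges the third edge of the balanced triangle guaranteed by link simpliciality is precisely the equation of $\mathcal{A}(G\setminus v)$ forcing the two corresponding values to differ; hence these $m$ values are pairwise distinct. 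Constant fibre cardinality yields the displayed product formula (for instance by the finite-field method), and Abe's Division Theorem then gives that $\mathcal{A}(G)$ is divisionally free.

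Operation (iii) is the substantial step. Here $G=G_{1}\cup G_{2}$ with $G_{1}\cap G_{2}=K^{\pm}_{n}$ and $G_{1},G_{2}\in\mathfrak{M}$. The key external input is the Brylawski-type factorization recorded in the excerpt, namely $\chi(\mathcal{A}(G),t)=\chi(\mathcal{A}(G_{1}),t)\,\chi(\mathcal{A}(G_{2}),t)/\chi(\mathcal{A}(K^{\pm}_{n}),t)$, which rests on $K^{\pm}_{n}$ being a modular flat of both $\mathcal{A}(G_{1})$ and $\mathcal{A}(G_{2})$ (itself clear since $K^{\pm}_{n}$ has a link elimination ordering, so $\mathcal{A}(K^{\pm}_{n})$ is supersolvable by Theorem~\ref{zaslavsky ss}). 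I would then argue by a secondary induction on $|V_{G_{2}}\setminus V_{G_{1}}|$, nested inside a primary induction on the number of vertices. If $|V_{G_{2}}\setminus V_{G_{1}}|=0$ then $G=G_{1}\in\mathfrak{M}$ and we are done; otherwise the aim is to locate $w\in V_{G_{2}}\setminus V_{G_{1}}$ with $G_{2}\setminus w\in\mathfrak{M}$ and $\chi(\mathcal{A}(G_{2}\setminus w),t)\mid\chi(\mathcal{A}(G_{2}),t)$. Granting such a $w$, one has $G\setminus w=G_{1}\cup(G_{2}\setminus w)$ with $G_{1}\cap(G_{2}\setminus w)=K^{\pm}_{n}$ still (as $w\notin V_{G_{1}}$), so $G\setminus w\in\mathfrak{M}$ by (iii); the primary induction applies to $G\setminus w$, giving that $\mathcal{A}(G\setminus w)=\mathcal{A}(G)^{\{x_{w}=0\}}$ is divisionally free; and applying the factorization to $G$ and to $G\setminus w$ yields $\chi(\mathcal{A}(G),t)/\chi(\mathcal{A}(G\setminus w),t)=\chi(\mathcal{A}(G_{2}),t)/\chi(\mathcal{A}(G_{2}\setminus w),t)$, a polynomial by the choice of $w$, so $\chi(\mathcal{A}(G\setminus w),t)\mid\chi(\mathcal{A}(G),t)$. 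Abe's Division Theorem then completes the inductive step.

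The hard part is exactly the existence of the vertex $w$ above together with the bookkeeping that makes it work: one must peel a vertex off the ``larger'' piece $G_{2}$ while simultaneously staying inside $\mathfrak{M}$, preserving the modular-gluing structure $G_{1}\cap G_{2}=K^{\pm}_{n}$, and keeping the characteristic-polynomial divisibility. This is where the modular-join (generalized-parallel-connection) machinery of \cite{tsujie2020modular-a} is genuinely used --- in particular the lattice-theoretic facts that a modular flat remains modular under restriction at a hyperplane not containing it, and that characteristic polynomials of modular joins factor as above --- and it should be carried out by tracing the construction sequence witnessing $G_{2}\in\mathfrak{M}$ (take the last link-simplicial vertex removed, or recurse into whichever gluing component contains the excess vertices, after re-choosing the decomposition of $G$ if necessary). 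By contrast, the base case and operation (ii) are routine.
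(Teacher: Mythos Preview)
The paper does not prove this proposition; it is quoted as \cite[Theorem~4.8]{tsujie2020modular-a} and used as a black box in the proof of Theorem~\ref{main theorem}. There is therefore no proof in the present paper to compare your outline against, and your sketch already goes further than what is offered here.

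Your treatment of the base case and of operation~(ii) is correct: the restriction of $\mathcal{A}(G)$ at $\{x_{v}=0\}$ is indeed $\mathcal{A}(G\setminus v)$, and link simpliciality of $v$ is exactly what makes the forbidden values $0,\ \pm x_{i}$ pairwise distinct over the complement of $\mathcal{A}(G\setminus v)$, giving $\chi(\mathcal{A}(G),t)=(t-m)\,\chi(\mathcal{A}(G\setminus v),t)$ and hence the divisibility needed for Theorem~\ref{Abe division}.

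For operation~(iii) you have correctly isolated the two ingredients (the Brylawski factorization and the coordinate restriction) and correctly identified the existence of the peeling vertex $w$ as the crux. One caution about your sketch: the claim ``$G_{2}\setminus w\in\mathfrak{M}$'' is not automatic, since $\mathfrak{M}$ is the \emph{minimal} class closed under (i)--(iii) and is not a priori closed under vertex deletion. If the witness for $G_{2}\in\mathfrak{M}$ itself terminates in a gluing $G_{2}=G_{3}\cup G_{4}$ over some $K^{\pm}_{m}$, there is no canonical ``last vertex'' to remove, it need not lie in $V_{G_{2}}\setminus V_{G_{1}}$, and removing it may disturb the $K^{\pm}_{m}$ intersection further down the construction tree; your parenthetical ``re-choosing the decomposition of $G$ if necessary'' hides genuine work. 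This bookkeeping is exactly the substance supplied by \cite{tsujie2020modular-a}, so your self-assessment---that (i) and (ii) are routine while (iii) genuinely requires the cited modular-join machinery---is accurate.
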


\begin{remark}
In this article, we always suppose that $ \mathcal{A}(G) $ contains the Boolean arrangements. 
Therefore the graph is considered to equip all loops in terminology of \cite{tsujie2020modular-a}. 
\end{remark}

Now we are ready to prove the main theorem. 
\begin{proof}[Proof of Theorem \ref{main theorem}]
The implication $ (\ref{main theorem 3}) \Rightarrow (\ref{main theorem 1}) $ is due to Lemma \ref{3=>1}. 
The implication $ (\ref{main theorem 2}) \Rightarrow (\ref{main theorem 3}) $ is trivial by the definition of divisional freeness. 

We just need to prove the implication $ (\ref{main theorem 1}) \Rightarrow (\ref{main theorem 2})$. 
This will be achieved by induction on $ \ell $, the number of vertices of $ G $. 
We may assume that $ \ell \geq 2 $ and show that $ G \in \mathfrak{M} $, where $ \mathfrak{M} $ is defined in Proposition \ref{modular join}. 

By Lemma \ref{G tilde balanced chordal}, $ \tilde{G} $ is balanced chordal. 
Therefore $ \tilde{G} $ has a link-simplicial vertex $ v $ or $ \tilde{G} $ is the union of two signed graphs whose intersection is a complete signed graph. 

Suppose the former case holds. 
By Proposition \ref{simp tilde}, the vertex $ v $ is link simplicial in $ G $. 
Since the induced subgraph $ G\setminus v $ also satisfies conditions (\ref{I})(\ref{II})(\ref{III}), the graph $ G\setminus v $ belongs to $ \mathfrak{M} $ bu the induction hypothesis. 
Hence $ G $ also belongs to $ \mathfrak{M} $. 

Next we consider the latter case. 
Then there exist induced subgraphs $ G_{1} $ and $ G_{2} $ such that $ G_{1}\cup G_{2}=G $ and $ G_{1} \cap G_{2} = K^{\pm}_{n} $ for some $ n $. 
By the induction hypothesis $ G_{1}, G_{2} \in \mathfrak{M} $ and hence $ G \in \mathfrak{M} $. 

Thus we have proven that $ G \in \mathfrak{M} $ by induction. 
Applying Proposition \ref{modular join}, we can conclude that $ \mathcal{A}(G) $ is divisionally free. 
\end{proof}

\bibliographystyle{amsplain}
\bibliography{bibfile}

\end{document}